\newtheorem{theorem}{Theorem}
\newtheorem{lemma}[theorem]{Lemma}
\newtheorem{question}[theorem]{Question}
\newtheorem{corollary}[theorem]{Corollary}
\newtheorem{problem}[theorem]{Problem}
\newtheorem{proposition}[theorem]{Proposition}
\theoremstyle{definition}
\newtheorem*{example}{Example}
\newtheorem*{remark}{Remark}
\title[Incompressible fillings of manifolds]
{Incompressible fillings of manifolds}
\author{Grigori Avramidi}
\address{Mathematische Institut\\Universit\"at M\"unster\\ Germany}
\email{avramidi@uni-muenster.de}
\def\ra{\rightarrow}
\def\beqa{\begin{eqnarray}}
\def\eeqa{\end{eqnarray}}
\def\beqa{\begin{eqnarray}}
\def\eeqa{\end{eqnarray}}
\DeclareMathOperator{\Stab}{St}
\DeclareMathOperator{\Isom}{Isom}
\DeclareMathOperator{\Out}{Out}
\DeclareMathOperator{\SL}{SL}
\DeclareMathOperator{\GL}{GL}
\DeclareMathOperator{\SO}{SO}
\begin{document}

\begin{abstract} 
We find boundaries of Borel-Serre compactifications of locally symmetric spaces, for which any filling is incompressible. We prove this result by showing that these boundaries have small singular models and using these models to obstruct compressions. We also show that small singular models of boundaries obstruct $S^1$-actions (and more generally homotopically trivial $\mathbb Z/p$-actions) on interiors of aspherical fillings. We use this to bound the symmetry of complete Riemannian metrics on such interiors in terms of the fundamental group. We also use small singular models to simplify the proofs of some already known theorems about moduli spaces (the minimal orbifold theorem and a topological analogue of Royden's theorem). 
\end{abstract}
\maketitle
\section*{Introduction}
Let $\partial$ be a closed, connected $(n-1)$-manifold. A {\it filling} of $\partial$ is a compact $n$-manifold $M$ with boundary $\partial$. It is called {\it compressible} if the manifold $M$ can be homotoped\footnote{We are allowed to move the boundary $\partial$ in the course of the homotopy.} to the boundary $\partial$. Otherwise, it is called {\it incompressible}. For instance, the disk $D^2$ is a compressible filling of $S^1$ while the torus with a disk removed $\mathbb T_1^2:=\mathbb T^2\setminus D^2$ is not. 
In dimension two, the closed orientable surfaces have compressible fillings (by handlebodies).
\begin{question}
Suppose $\partial$ is a boundary. Does it have a compressible filling? 
\end{question}
This paper shows for various boundaries $\partial$ that all $\pi_1$-injective\footnote{A filling is $\pi_1$-injective if the map $\partial\hookrightarrow M$ is a $\pi_1$-injection.} fillings $(M,\partial)$ are incompressible. In some instances we show that a boundary has no compressible fillings at all. 

\begin{theorem}
\label{incompressible}
Let $\Gamma<\SL(\mathbb Z^{4k})$ be a finite index, torsion free subgroup\footnote{For instance, the finite index subgroup $\Gamma:=\ker(\SL(\mathbb{Z}^{4k}))\ra\SL(\mathbb{F}^{4k}_3))$ is torsion free.}, and let $\partial$ be the boundary of the Borel-Serre compactification of the locally symmetric manifold $\Gamma\setminus\SL(\mathbb R^{4k})/\SO(4k)$. Then every filling of $\partial$ is incompressible.
\end{theorem}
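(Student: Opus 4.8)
The plan is to deduce the theorem from two things about the boundary: that it carries a \emph{small singular model}, and that such boundaries have no compressible fillings. Write $X=\SL(\R^{4k})/\SO(4k)$, set $n=\dim X=\binom{4k+1}{2}-1$, and let $\M=\Gamma\backslash\X$ be the Borel--Serre compactification, a compact aspherical $n$--manifold with $\pi_1(\M)=\Gamma$ and $\partial\M=\partial$. By Borel--Serre, $\Gamma$ is a torsion-free duality group of cohomological dimension $\nu:=\binom{4k}{2}$, with $n-\nu=4k-1$ equal to the $\Q$--rank, and its dualizing module is the Steinberg module $\mathrm{St}:=\widetilde H_{4k-2}\bigl(\Delta_\Q(\SL_{4k})\bigr)$; moreover the universal cover $\widetilde\partial=\partial\X$ is $\Gamma$--homotopy equivalent to the rational Tits building $\Delta_\Q(\SL_{4k})$, which by Solomon--Tits is a wedge of $(4k-2)$--spheres with $H_{4k-2}(\widetilde\partial)=\mathrm{St}$. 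Since $4k-2\ge 2$, this wedge is simply connected, so $\pi_1(\partial)=\Gamma$ and $\widetilde\partial$ really is the universal cover; thus $\partial$ is a closed $(n-1)$--manifold whose universal cover is $(4k-3)$--connected of dimension $4k-2$, far below $(n-1)/2$ --- the small singular model.

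Now suppose $(M,\partial)$ is a compressible filling, with $\iota\colon\partial\hookrightarrow M$ and $f\colon M\to\partial$ and $\iota f\simeq\id_M$. Then $\iota_*\colon\Gamma\to\pi_1(M)$ is split surjective, so $\pi_1(M)$ is a quotient of $\Gamma$; by the Margulis normal subgroup theorem ($\SL_{4k}(\Z)$, $4k\ge 4$, is an irreducible higher-rank lattice) together with torsion-freeness of $\Gamma$, either $\iota_*$ is an isomorphism or $\pi_1(M)$ is finite. The finite case I would dispatch by passing to the finite universal cover of $M$, reducing to a simply connected compressible filling of a finite cover $\partial'$ of $\partial$ --- whose universal cover is still a wedge of $(4k-2)$--spheres and whose fundamental group is an infinite duality group --- and ruling it out by the Lefschetz-duality bookkeeping used below. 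In the main case $\iota_*\colon\Gamma\xrightarrow{\sim}\pi_1(M)$ the compression lifts $\Gamma$--equivariantly to universal covers, so $\widetilde M$ is $\Gamma$--dominated by $\widetilde\partial\simeq\bigvee S^{4k-2}$; hence $H_*(\widetilde M;\Z)$ is concentrated in degrees $0$ and $4k-2$, the group $F:=H_{4k-2}(\widetilde M)$ is a $\Z\Gamma$--module retract of $\mathrm{St}$, and $\widetilde M\simeq\bigvee S^{4k-2}$.

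The heart of the main case is a duality squeeze. On one side, $M$ is a compact $n$--manifold with boundary, so Poincar\'e--Lefschetz duality (with orientation coefficients) gives $H^{n-1}(M;\Q)\cong H_1(M,\partial;\Q_w)$; since $\partial$ is connected and $\Gamma$ has property (T) --- so $H_1(\Gamma;\Q_w)=0$ for the orientation character $w$, whence $H_1(M;\Q_w)=0$ --- the exact sequence of the pair forces $H_1(M,\partial;\Q_w)=0$, and thus $H^{n-1}(M;\Q)=0$. On the other side, the Serre spectral sequence of $\widetilde M\to M\to B\Gamma$ has $E_2^{p,q}=H^p(\Gamma;H^q(\widetilde M;\Q))$ with nonzero rows only at $q\in\{0,4k-2\}$ and $E_2^{p,q}=0$ for $p>\nu$; since $n-1=\nu+(4k-2)$, the entry $E_2^{\nu,4k-2}$ admits no incoming or outgoing differential and is the only contribution in total degree $n-1$, so $H^{n-1}(M;\Q)\cong H^\nu(\Gamma;F^*_\Q)$ with $F^*_\Q=\mathrm{Hom}_\Q(F\otimes\Q,\Q)$. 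By Bieri--Eckmann duality $H^\nu(\Gamma;F^*_\Q)\cong (F^*_\Q\otimes_\Q\mathrm{St}_\Q)_\Gamma$, and the $\Gamma$--equivariant map $F^*_\Q\otimes\mathrm{St}_\Q\xrightarrow{1\otimes\pi}F^*_\Q\otimes F_\Q\xrightarrow{\mathrm{ev}}\Q$ (with $\pi\colon\mathrm{St}_\Q\twoheadrightarrow F_\Q$ the given retraction) is surjective on coinvariants whenever $F\neq 0$. Comparing the two computations forces $F=0$, so $\widetilde M$ is contractible and $M$ is aspherical.

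It remains to show that no aspherical filling of $\partial$ is compressible. An aspherical filling with $\pi_1=\Gamma$ is, up to homotopy of pairs, the canonical pair $(\M,\partial)$ --- both are $K(\Gamma,1)$'s with $\pi_1$--isomorphism boundary inclusions --- so this is equivalent to: the canonical fibration $\widetilde\partial\to\partial\to B\Gamma$ admits no homotopy section, equivalently $\M$ itself is incompressible. I would attack this by obstruction theory: the obstructions to such a section lie in $H^{j}(\Gamma;\pi_{j-1}(\widetilde\partial))$ for $4k-1\le j\le\nu$, the primary one being the transgression $\tau\in H^{4k-1}(\Gamma;\mathrm{St})$ of the fundamental class of the fiber wedge, and one must exhibit an obstruction that cannot be annihilated. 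This last step is the main obstacle, and it is precisely here that I expect the hypothesis $N=4k$ (rather than general $N\ge 4$) to enter --- through an explicit characteristic-class or group-cohomology computation forcing the relevant obstruction class to survive; everything preceding is insensitive to the residue of $N$ modulo $4$.
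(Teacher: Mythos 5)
Your reduction via the Margulis normal subgroup theorem to the two cases (i) $\iota_*:\Gamma\to\pi_1M$ an isomorphism and (ii) $\pi_1M$ finite (hence trivial, since the compression splits it back into the torsion-free $\Gamma$) matches the paper, and your duality squeeze in case (i) showing $F=H_{4k-2}(\widetilde M)=0$, hence $M$ aspherical, is a correct extra step not in the paper. But at that point nothing has been finished: the statement you are left with --- the fibration $\widetilde\partial\to\partial\to B\Gamma$ admits no homotopy section --- \emph{is} the substantive content of the theorem, and you explicitly leave it open (``the main obstacle''). In the paper this is Theorem \ref{nonperipheral}, and it is not obtained by computing obstruction classes in $H^{j}(\Gamma;\pi_{j-1}\widetilde\partial)$ directly; it follows from the equivariant-homology statement of Proposition \ref{diagonal} (the diagonal $C\hookrightarrow C\times C$ is an $H^{\Gamma}_{\geq n-1}$-isomorphism) fed into Proposition \ref{nosection}, and this in turn needs Proposition \ref{flagcomplexsmall}: the verification, via Lemma \ref{upper}, Corollary \ref{orbit} and the determinant-$\pm1$ lemma, that disjoint flags $E_*,F_*$ in the Tits building satisfy the stabilizer bound (\ref{stab}), $\mathrm{hdim}(\Stab_{\Gamma}(E_*)\cap\Stab_{\Gamma}(F_*))+|\sigma|+|\tau|<\dim\partial$. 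Note that ``small singular model'' in the paper means exactly these stabilizer-dimension inequalities, not (as in your sketch) the fact that $\widetilde\partial\simeq\vee S^{4k-2}$ has small dimension compared to $\partial$. None of this input appears in your argument, so the heart of the proof is missing.

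Moreover the hypothesis $m=4k$ is misplaced in your outline: it is needed in case (ii), not case (i). The $\pi_1$-injective obstruction (Theorem \ref{nonperipheral} plus Proposition \ref{flagcomplexsmall}) works for every $m$, whereas your proposed dispatch of the simply connected case by ``the Lefschetz-duality bookkeeping used below'' cannot work as stated: that bookkeeping rests on $\cd_{\mathbb Q}\Gamma=\binom{4k}{2}$ and Bieri--Eckmann duality for the infinite group $\pi_1M\cong\Gamma$, and has no analogue when $\pi_1M$ is trivial. The paper handles this case by Proposition \ref{simplyconnected}: a simply connected compression forces $H_*(\partial)$ to be torsion free and concentrated in degrees $\{0,q-1,n-q,n-1\}$, all of which are even precisely when $m=4k$, contradicting $\chi(\partial)=0$ for the even-dimensional closed manifold $\partial$. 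So the residue of $m$ modulo $4$ enters through this parity/Euler-characteristic argument, and your expectation that it should appear in the aspherical obstruction computation is backwards; as written, neither of the two essential steps (the no-section theorem for $\pi_1$-isomorphic fillings, and the exclusion of simply connected compressible fillings) is actually proved.
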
 

There are obstructions of different types. One has to do with the homotopy type of the universal cover $\widetilde\partial$. It can happen that not enough of the homology in $\widetilde\partial$ is represented by spheres to allow for a compression. This happens, for instance, for the complex projective $3$-space $\mathbb CP^3$. The other obstruction has to do with the action of the fundamental group on the universal cover $\widetilde\partial$. One can use it to show that all $\pi_1$-injective fillings of $\partial((\mathbb T^2_1)^d)$ are incompressible. It is also the main ingredient in the proof of Theorem \ref{incompressible}. 

\subsection*{$\pi_1$-injective fillings} The main step in the proof of Theorem \ref{incompressible} is showing that the $\pi_1$-injective fillings are incompressible. This is done by showing that covers $\widetilde\partial$ of the boundaries $\partial$ have singular models (spherical Tits buildings) that are small in a sense that we will now describe.  
\subsection*{Small singular models}
Let $\widetilde\partial\ra\partial$ be a connected regular cover with covering group $\Gamma$.
A {\it singular model of $\widetilde\partial$} is a $\Gamma$-complex $C$ whose Borel quotient\footnote{We use the notation $X\times_{\Gamma}Y:=(X\times Y)/\Gamma$.} $C\times_{\Gamma}E\Gamma$ is homotopy equivalent to $\partial$.\footnote{Equivalently, there is a homotopy equivalence $h:\widetilde\partial\ra C$ that is also a $\Gamma$-map.} It is {\it small} if, in addition, $C$ is a flag complex, and for any pair of disjoint simplices $\sigma$ and $\tau$ in $C$ the homological dimension stabilizers is bounded by\footnote{Inequality (\ref{one}) is a special case of (\ref{stab}) with the convention $|\emptyset|=-1$. All the arguments also work without it. However, in practice it is often helpful to have it when checking that a space is a small singular model via a vertex trading argument.} 
\begin{eqnarray}
\label{one}
\mbox{hdim}(\Stab_{\Gamma}(\sigma))+|\sigma|&\leq&\dim(\partial),\\
\label{stab}
\mbox{hdim}(\Stab_{\Gamma}(\sigma)\cap \Stab_{\Gamma}(\tau))+|\sigma|+|\tau|&<&\dim(\partial).
\end{eqnarray}
\begin{remark}
It follows from the spectral sequence 
$$
H_*(B\Gamma;C_*(C))\implies H_*(C\times_{\Gamma}E\Gamma)=H_*(\partial)
$$ 
that there are always simplices $\sigma$ in $C$ for which (\ref{one}) is an equality.
\end{remark}
\begin{theorem}
\label{nonperipheral}
If some connected $\Gamma$-cover $\widetilde\partial\ra\partial$ has a small singular model, the any $\pi_1$-injective filling $\partial\hookrightarrow M$ is incompressible.
\end{theorem}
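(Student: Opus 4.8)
The plan is to argue by contradiction: suppose $\partial\hookrightarrow M$ is a $\pi_1$-injective filling that is compressible, and extract from this — using the smallness of $C$ — enough rigidity of $M$ to violate Poincar\'e--Lefschetz duality for the pair $(M,\partial)$, where $d:=\dim\partial$. Write $i\colon\partial\hookrightarrow M$ for the inclusion and choose $r\colon M\to\partial$ to be the end of the compressing homotopy, so $i\circ r\simeq\mathrm{id}_M$. The soft consequences come first: $i_*\colon\pi_1\partial\to\pi_1 M$ is onto, hence an isomorphism by $\pi_1$-injectivity, so $r_*=(i_*)^{-1}$ and the self-map $e:=r\circ i\colon\partial\to\partial$ induces the identity on $\pi_1$ and is idempotent up to homotopy; moreover $M$ is a homotopy retract of $\partial$, and $i$ is a homotopy equivalence precisely when $e\simeq\mathrm{id}_\partial$. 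Identifying $\widetilde\partial$ with the boundary of a $\Gamma$-cover $\widetilde M\to M$ (via $i_*$), the maps $i$, $r$ and the compressing homotopy lift $\Gamma$-equivariantly, and transporting along a $\Gamma$-equivariant homotopy equivalence $\widetilde\partial\simeq C$ turns $e$ into an idempotent $\Gamma$-self-map $g\colon C\to C$ inducing the identity on $\pi_1 C$.

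The model case $C=\mathrm{pt}$ guides everything. There $\partial=B\Gamma$ is aspherical, so $M$ — a homotopy retract of $\partial$ with $\pi_1 M=\Gamma$ — is aspherical and $i$ is a homotopy equivalence; but then the connecting map embeds $H_{d+1}(M,\partial;\mathbb Z/2)$ into $H_d(\partial;\mathbb Z/2)\xrightarrow{\ \cong\ }H_d(M;\mathbb Z/2)$, forcing $H_{d+1}(M,\partial;\mathbb Z/2)=0$ and contradicting the non-triviality of the fundamental class of the compact $(d{+}1)$-manifold with boundary $M$. In general, $C\ne\mathrm{pt}$, and the plan is to use the smallness of $C$ to show that $i^*\colon H^d(M;\mathcal A)\to H^d(\partial;\mathcal A)$ — a split monomorphism by compressibility, for coefficient systems $\mathcal A$ on $\partial$ — is also onto. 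Granting this, the argument closes as above: since a compact manifold with nonempty boundary has the homotopy type of a $d$-complex, $H^{d+1}(M;\mathcal A)=0$, so surjectivity of $i^*$ in degree $d$ kills the connecting map $H^d(\partial;\mathcal A)\to H^{d+1}(M,\partial;\mathcal A)$, forcing $H^{d+1}(M,\partial;\mathcal A)=0$; but Poincar\'e--Lefschetz duality identifies $H^{d+1}(M,\partial;\mathbb Z/2)$ with $H_0(M;\mathbb Z/2)=\mathbb Z/2$, a contradiction.

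Proving this surjectivity is the heart of the matter, and it is here that the smallness conditions (\ref{one}) and especially (\ref{stab}) are used. Equivalently one must show the idempotent $g$ acts as the identity on $H^d(\partial;\mathcal A)=H^d_\Gamma(C;\mathcal A)$ (for then $g^*=i^*r^*=\mathrm{id}$ in degree $d$ forces $i^*$ onto). Filtering $C\times_\Gamma E\Gamma\simeq\partial$ by the skeleta of $C$ produces a spectral sequence with $E_1^{p,q}=\bigoplus_{[\sigma],\,|\sigma|=p}H^q(\Stab_\Gamma(\sigma);\mathcal A)$, which by (\ref{one}) vanishes above total degree $d$; this controls $H^d(\partial;\mathcal A)$ in terms of simplex stabilizers, and the action of $g$ on it is organized by the $\Gamma$-orbits of pairs $(\sigma,g\sigma)$ — the ``diagonal'' pairs with $g\sigma=\sigma$ reproducing the identity, and the remaining pairs, for which $\sigma$ and $g\sigma$ are disjoint, contributing only through the groups $\Stab_\Gamma(\sigma)\cap\Stab_\Gamma(g\sigma)$, which the strict inequality (\ref{stab}) forces to be too small to see degree $d$. (The inequality (\ref{one}) alone is not enough: for example $S^1\times S^2$ carries the compressible $\pi_1$-injective filling $S^1\times D^3$, so the disjoint-simplices condition is genuinely needed.) With the off-diagonal terms gone and $C$ a flag complex, $g^*$ is pinned to the identity in degree $d$, and the contradiction of the model case goes through.

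The formal manipulations of the first paragraph and the fundamental-class obstruction at the end of the second are routine; the main obstacle is the comparison in the third paragraph — converting the combinatorial smallness of $C$ into exactly the top-degree vanishing that the Poincar\'e--Lefschetz duality of the filling contradicts. Two technical points will need care. First, $\widetilde\partial$ is only $\Gamma$-homotopy equivalent to $C$, not equal to it, so the pair-of-simplices analysis is cleanest carried out at the level of $\mathbb Z\Gamma$-chain complexes, where homotopy invariance is available, rather than with deleted products of spaces (which are not homotopy invariant). Second, the duality of $(M,\partial)$ is naturally a statement over $\mathbb Z\pi_1 M$, so one must check that it descends to the $\Gamma$-cover $(\widetilde M,\widetilde\partial)$ in a form compatible with the spectral-sequence computation.
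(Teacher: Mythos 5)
Your endgame is sound and is, at bottom, the same as the paper's: everything reduces to showing that the compression has ``maximal'' effect in top degree, contradicting the fact that the fundamental class of $\partial$ dies in $M$ (your Poincar\'e--Lefschetz formulation with $\mathbb{Z}/2$-coefficients is equivalent to this), and your observation that (\ref{one}) alone cannot suffice, via $S^1\times S^2=\partial(S^1\times D^3)$, is a correct sanity check. The gap is the central step: the claim that smallness forces the transported idempotent $g$ to act as the identity on $H^d(\partial;\mathcal{A})$, hence $i^*$ onto in degree $d$, is asserted rather than proved, and the mechanism you sketch fails as stated. After an equivariant simplicial approximation, $\sigma$ and $g\sigma$ need not be equal or disjoint --- they can share some but not all vertices, and inequality (\ref{stab}) says nothing about such partially overlapping pairs. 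Moreover, the map induced by $g$ on the isotropy spectral sequence of $C$ does not split into a ``diagonal part equal to the identity'' plus ``off-diagonal parts factoring through $\Stab_{\Gamma}(\sigma)\cap\Stab_{\Gamma}(g\sigma)$''; even when $g\sigma=\sigma$ nothing guarantees the induced map on that $E_1$-summand is the identity, and with the constant $\mathbb{Z}/2$-coefficients your final duality contradiction actually uses, $H^d(\partial;\mathbb{Z}/2)\cong\mathbb{Z}/2$ retains no stabilizer data, so the spectral sequence does not pin $g^*$ down at all. There is also a smaller mismatch: smallness bounds \emph{homological} dimension, while your vanishing statements are cohomological (cd can exceed hdim by one); the paper works in homology precisely to avoid this.

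The missing content is exactly what Proposition \ref{diagonal} supplies. There the comparison is not a self-map of $C$ but the diagonal $C\hookrightarrow C\times C$: one proves the fiberwise vanishing $H^{\Stab_{\Gamma}(\sigma)}_{\geq n-1-|\sigma|}(C,\Delta_{\sigma})=0$ by first treating the pair $(C,N(\sigma))$ --- whose relative chains are spanned by simplices disjoint from $\sigma$, so (\ref{stab}) genuinely applies --- and then using the flag hypothesis to see that $N(\sigma)$ and $\Delta_{\sigma}=\sigma*Lk(\sigma)$ are both acyclic; this is where partial overlaps are handled and where flagness earns its keep (in your sketch it plays no role). A double-complex spectral sequence assembles this into $H^{\Gamma}_{\geq n-1}(C\times C,\Delta)=0$, i.e.\ the diagonal carries all top-degree equivariant homology. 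The paper then couples this with the filling (Proposition \ref{nosection}): the diagonal $\partial\to\widetilde\partial\times_{\Gamma}\widetilde\partial$ is an $H_{n-1}$-isomorphism, a bundle section into $\widetilde\partial\times_{\Gamma}\widetilde M$ gives injectivity, compressibility gives surjectivity, and the resulting graph-map isomorphism contradicts the vanishing of $[\partial]$ in $H_{n-1}(M)$. So your reduction to ``$g^*=\mathrm{id}$ in top degree'' is logically equivalent to the contradiction you seek, but it is not a lemma you can establish inside $C$ alone by the pair-counting you describe; you need Proposition \ref{diagonal} (the $C\times C$ versus $\Delta$ computation), or a genuine substitute for it, together with the bundle-section arguments that bring the filling $M$ into play.
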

\begin{remark}
If $\Gamma=\pi_1\partial$ and $\widetilde\partial\sim\vee S^{q-1}$ then any $\pi_1$-isomorphic filling $(M,\partial)$ has a nontrivial $(n-q)$-dimensional {\it fundamental homology class} $e$.\footnote{More precisely, it follows from the spectral sequence for the bundle $\widetilde\partial\ra\widetilde\partial\times_{\Gamma}\widetilde M\ra M$ that the group $H_{n-q}(M;H_{q-1}(\widetilde\partial))$ is infinite cyclic ($M$ does not need to be aspherical). The class $e$ is a generator of this group.} In this case, the proof of Theorem \ref{nonperipheral} can be interpreted as showing that this fundametal class $e$ is ``stuck'' in the interior of the filling and cannot be compressed to the boundary $\partial$.  
\end{remark}
Simply connected manifolds do not have small singular models. 
At the other extreme, a single point is a small singular model for the universal cover of any aspherical manifold. 
More interestingly, (some) Borel-Serre boundaries of locally symmetric manifolds and Harvey-Ivanov boundaries of moduli spaces are a source of small singular models. 
\subsection*{$(d\geq 2)$-fold Cartesian products $M:=(\mathbb T_1^2)^d=\mathbb T_1^2\times\dots\times\mathbb T_1^2$} This space has fundamental group $\Gamma=F_2^d$. Let $\partial$ be the boundary of $M$. The $\Gamma$-cover of the boundary $\widetilde\partial\ra\partial$ has a small singular model that is a $d$-fold join of infinite discrete sets. Explicitly, let $F_2^d=\left<a_1,b_1\right>\times\dots\times\left<a_d,b_d\right>$. Let $X_i:=\left<a_i,b_i\right>/\left<[a_i,b_i]\right>$ be the discrete, transitive left $\left<a_i,b_i\right>$-space\footnote{Note that this is {\it not} the abelianization $\left<a_i,b_i\right>/\left<\left<[a_i,b_i]\right>\right>$.} whose stabilizers are conjugates of the cyclic group generated by the commutator $[a_i,b_i]=a_ib_ia_i^{-1}b_i^{-1}$. It is well known that the $d$-fold join of these spaces
$$
X_1*\dots*X_d
$$
is a singular model for $\widetilde\partial$ and not hard to check that it is small.
\subsection*{Space of flat $m$-tori of unit volume}
The space of flat $m$-tori of unit volume is the orbifold $\SL(\mathbb Z^m)\setminus \SL(\mathbb R^m)/\SO(m)$. Let $\Gamma<\SL(\mathbb Z^m)$ be a finite index, torsion free subgroup and denote by $\dot{M}$ the quotient $\Gamma\setminus \SL(\mathbb R^m)/\SO(m)$. It can be compactified to a compact manifold-with-boundary $(M,\partial)$ (the Borel-Serre compactification). Its $\Gamma$-cover $(\widetilde M,\widetilde\partial)$ is homotopy equivalent to $(*,\vee S^{m-2})$, and the complex of flags in $\mathbb Q^m$ is a singular model for $\widetilde\partial$. 
\begin{proposition}
\label{flagcomplexsmall}
The flag complex of $\mathbb Q^m$ is a small singular model for $\widetilde\partial$.
\end{proposition}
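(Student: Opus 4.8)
Since the flag complex of $\mathbb Q^m$ is the order complex of the poset of proper nonzero subspaces of $\mathbb Q^m$ --- and order complexes are flag complexes --- the plan is just to verify the two inequalities (\ref{one}) and (\ref{stab}). Write $X:=\SL(\mathbb R^m)/\SO(m)$. One may take $\widetilde\partial$ to be the boundary $\partial\overline X$ of the Borel--Serre bordification $\overline X$ of $X$, so that $\dim\partial=\dim X-1$; recall also $\cd\Gamma=\binom m2$, which equals $\dim\partial-(m-2)$. A simplex $\sigma$ of the flag complex is a flag $\mathcal F\colon V_0\subsetneq\dots\subsetneq V_k$ of proper nonzero subspaces ($|\sigma|=k$), and $\Stab_\Gamma(\sigma)=\Gamma\cap P_{\mathcal F}$ is a torsion-free arithmetic subgroup of the parabolic $\mathbb Q$-subgroup $P_{\mathcal F}\le\SL_m$ stabilising this flag.

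For (\ref{one}) I would use Borel--Serre reduction theory: $\Gamma\cap P_{\mathcal F}$ acts freely and properly discontinuously, with aspherical quotient, on the open boundary face $e(P_{\mathcal F})\cong\mathbb R^{\dim X-(k+1)}$ of $\overline X$, whence
\[
\operatorname{hdim}(\Stab_\Gamma\sigma)\ \le\ \cd(\Stab_\Gamma\sigma)\ \le\ \dim X-k-1\ =\ \dim\partial-|\sigma| .
\]
(This also drops out of the argument below on putting $\tau=\varnothing$.)

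For (\ref{stab}), let $\sigma,\tau$ be disjoint simplices --- flags $\mathcal F\colon V_0\subsetneq\dots\subsetneq V_k$ and $\mathcal G\colon W_0\subsetneq\dots\subsetneq W_l$ with no subspace in common --- so that $\Stab_\Gamma\sigma\cap\Stab_\Gamma\tau=\Gamma\cap H$ with $H:=P_{\mathcal F}\cap P_{\mathcal G}$. The structural input is that $\GL_m(\mathbb Q)$ has finitely many orbits on pairs of flags, and every pair is conjugate to one in which both flags are spanned by coordinate subspaces of a common basis; so I would fix such a basis $e_1,\dots,e_m$, write $V_c=\langle e_i : i\in S_c\rangle$ and $W_d=\langle e_i : i\in T_d\rangle$, and set $S_{-1}=T_{-1}=\varnothing$, $S_{k+1}=T_{l+1}=\{1,\dots,m\}$. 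In this basis $H$ is the ``staircase'' subgroup preserving every coordinate subspace spanned by one of the $S_c$ or $T_d$; the associated preorder on $\{1,\dots,m\}$ has equivalence classes the nonempty cells $A_{cd}=(S_c\setminus S_{c-1})\cap(T_d\setminus T_{d-1})$ of the $(k+2)\times(l+2)$ grid, ordered by the product order, and $H=U_H\rtimes L_H$ with $U_H$ unipotent and $L_H\cong S(\prod_x\GL_{n_x})$, $n_x:=|A_{cd}|$. As $\Gamma\cap U_H$ is a torsion-free lattice in $U_H$ (so its cohomological dimension is $\dim U_H$), and the image of $\Gamma\cap H$ in $L_H$ is a torsion-free arithmetic subgroup (replace $\Gamma$ by a neat finite-index subgroup if needed, which leaves all cohomological dimensions unchanged), the Borel--Serre computation $\operatorname{vcd}(L_H)=\sum_x\binom{n_x}{2}$ and the spectral sequence of the extension give
\[
\operatorname{hdim}(\Gamma\cap H)\ \le\ \dim U_H+\sum_x\binom{n_x}{2}\ =\ \binom m2-\mathrm{INC},
\]
where $\mathrm{INC}:=\sum n_xn_y$ is summed over unordered pairs of \emph{incomparable} cells; the last equality uses $\sum_xn_x=m$ and the fact that $\dim U_H$ is the sum of $n_xn_y$ over the \emph{comparable} pairs of distinct cells. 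So (\ref{stab}) reduces to the combinatorial inequality $\mathrm{INC}\ge|\sigma|+|\tau|-m+3$, equivalently $\mathrm{INC}\ge R+C-m-1$ with $R=k+2$, $C=l+2$.

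This combinatorial inequality is the step I expect to be the main obstacle. I would prove it by induction on $R$. In the base case $R=2$ the bound is nonvacuous only when $C=m$, in which case each column of the grid holds exactly one cell; if no two cells were incomparable, the cells of the first row would occupy a contiguous initial segment of columns, and this would force $V_0=W_d$ for some $d$ --- impossible, as $\mathcal F$ and $\mathcal G$ share no subspace. This is the one place disjointness is used. For the inductive step: if $\mathrm{INC}=0$ then the cells are totally ordered, so $\mathcal F\cup\mathcal G$ is a flag of at most $m-1$ proper subspaces and $R+C-m-1\le0$; otherwise an incomparable pair of cells exists, and a short monotonicity argument --- comparing the largest column index occurring in a row with the smallest occurring in the next --- produces an incomparable pair inside two \emph{consecutive} rows $c,c+1$. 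Merging those two rows is exactly the operation of deleting $V_c$ from $\mathcal F$: it keeps $\mathcal F$ disjoint from $\mathcal G$, lowers $R$ by one, and strictly decreases $\mathrm{INC}$, so the inductive hypothesis applied to the smaller pair yields $\mathrm{INC}\ge(R-1)+C-m-1+1=R+C-m-1$, which would finish the argument.
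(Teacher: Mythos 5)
Your proposal is correct, but it reaches the key stabilizer bound (\ref{stab}) by a genuinely different route than the paper. The paper stays inside the real symmetric space: it proves Lemma \ref{upper} by counting above-diagonal matrix entries that are forced to vanish on the stabilizer of a flag moved off the standard one by a Weyl element, deduces Corollary \ref{orbit} (the $\Stab(E_*)$-orbit of a disjoint flag has dimension at least its length), builds an auxiliary Lie group $G$ out of the nilpotent pieces and determinant-$\pm1$ blocks on the associated graded, uses the integrality lemma (integer characteristic polynomial and $\det=\pm1$ force $\det=\pm1$ on invariant subspaces) to place $\Stab_{\Gamma}(E_*)\cap\Stab_{\Gamma}(F_*)$ inside $G$, and then bounds $\operatorname{hdim}$ by $\dim(G/K)$ via a codimension count in $\GL(\mathbb R^m)/\SO(m)$. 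You instead put both rational flags in a common adapted basis, identify the intersection of the two stabilizers as an arithmetic subgroup of the staircase group $U_H\rtimes S(\prod_x\GL_{n_x})$, and quote the Borel--Serre vcd formula, which converts everything into the combinatorial inequality $\mathrm{INC}\geq|\sigma|+|\tau|-m+3$; disjointness enters only through the observation that two disjoint, mutually comparable flags concatenate to a flag with at most $m-1$ proper subspaces, and your row-merging induction does check out (the monotonicity argument really does produce an incomparable pair between consecutive rows whenever $\mathrm{INC}>0$, merging those rows is deletion of $V_c$, preserves disjointness and all other incomparabilities, and drops $\mathrm{INC}$ by at least one). What each approach buys: your Levi-block count $\sum_x\binom{n_x}{2}$ is sharper than the paper's use of the full $\GL$-block symmetric spaces, and the paper's characteristic-polynomial lemma becomes unnecessary because the determinant constraint is absorbed into the $\mathbb Q$-rank of the Levi; in exchange you lean on heavier standard machinery (common basis for a pair of flags, Borel's theorem that images of arithmetic groups under $\mathbb Q$-morphisms are arithmetic, neatness plus Serre's theorem to dispose of torsion in the Levi image, and the vcd computation), whereas the paper's matrix-entry counting and integrality lemma are elementary and essentially self-contained. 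Like the paper, you take as given the Solomon--Tits/Borel--Serre fact that the flag complex is a singular model for $\widetilde\partial$, so verifying (\ref{one}) and (\ref{stab}) is indeed all that remains, and your verification of (\ref{one}) via the boundary faces of the bordification is fine.
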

\subsection*{Space of closed hyperbolic surfaces of genus $g$}
Let $\Sigma_g$ be a closed orientable surface of genus $g\geq 2$, and denote by $\Gamma$ a finite index torsion free subgroup of the extended mapping class group $\pi_0Homeo(\Sigma_g)$. Let $\dot{M}$ be the finite (orbifold) cover of moduli space\footnote{In this paper, {\it moduli space} will mean the space $\mathcal M^{\pm}_g$ of hyperbolic structures on the closed, orientable, genus $g\geq 2$ surface. It is double covered by the space of oriented hyperbolic structures.} corresponding to the subgroup $\Gamma$. It is can be compactified to a compact manifold-with-boundary $(M,\partial)$ (the Harvey-Ivanov compactification). 
Its $\Gamma$-cover $(\widetilde M,\widetilde\partial)$ is homotopy equivalent to $(*,\vee S^{2g-2})$, and the curve complex of $\Sigma_g$ is a singular model for $\widetilde\partial$.  
\begin{proposition}
\label{curvecomplexsmall}
The curve complex is a small singular model for $\widetilde\partial$. 
\end{proposition}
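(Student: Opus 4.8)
The curve complex $C:=\mathcal C(\Sigma_g)$ is already known (Harvey, Ivanov) to be a singular model for $\widetilde\partial$, so the plan is only to verify the extra conditions making it \emph{small}: that $C$ is a flag complex and that the stabilizer inequalities (\ref{one}) and (\ref{stab}) hold, with $\dim\partial=6g-7$ since moduli space is $(6g-6)$-dimensional. Note $C$ has dimension $3g-4$, its top-dimensional simplices being pants decompositions, so (\ref{one}) is at least dimensionally consistent. That $C$ is a flag complex is classical: fix a hyperbolic metric on $\Sigma_g$; distinct simple closed geodesics are either equal or transverse, so a finite set of distinct isotopy classes that is pairwise compatible has pairwise-disjoint geodesic representatives, hence spans a simplex. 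Since $\Gamma<\mathrm{Mod}(\Sigma_g)$ is torsion free of finite index and $\operatorname{hdim}$ is unchanged on passing to a finite index subgroup of a torsion free group, all stabilizer estimates may be made virtually inside $\mathrm{Mod}(\Sigma_g)$, using Harer's computation of the virtual cohomological dimension of mapping class groups of punctured surfaces.

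For (\ref{one}), fix a simplex $\sigma$, i.e.\ a multicurve with $j:=|\sigma|+1$ components. Cutting $\Sigma_g$ along these curves produces a surface $\Sigma_\sigma$ with $2j$ punctures whose components are punctured surfaces $S_i=\Sigma_{g_i,b_i}$ ($i=1,\dots,c$) with $b_i\ge1$ and $2g_i-2+b_i>0$, and $\sum b_i=2j$, $\sum(2g_i-2+b_i)=2g-2$. Up to finite index, $\Stab(\sigma)$ fits into $1\to\Z^{j}\to\Stab(\sigma)\to\mathrm{Mod}(\Sigma_\sigma)\to1$ with the $\Z^j$ generated by Dehn twists along the curves of $\sigma$, so
\[
\operatorname{hdim}(\Stab_\Gamma(\sigma))\le j+\sum_{i=1}^{c}\operatorname{vcd}\bigl(\mathrm{Mod}(S_i)\bigr).
\]
Plugging in $\operatorname{vcd}(\mathrm{Mod}(\Sigma_{g_i,b_i}))=4g_i+b_i-4$ for $g_i\ge1$ and $=b_i-3$ for $g_i=0$ (in either case $\le4g_i+b_i-4+\varepsilon_i$ with $\varepsilon_i=1$ exactly when $g_i=0$), and using $\sum g_i=g-1-j+c$, a short computation gives $\sum_i\operatorname{vcd}(\mathrm{Mod}(S_i))\le 4g-4-2j+c_0$ with $c_0:=\#\{i:g_i=0\}$, hence $\operatorname{hdim}(\Stab_\Gamma(\sigma))+|\sigma|\le 4g-5+c_0$. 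Finally each genus-zero component $\Sigma_{0,b_i}$ has $b_i\ge3$, so contributes $b_i-2\ge1$ to $\sum(2g_i-2+b_i)=2g-2$; thus $c_0\le 2g-2$ and $\operatorname{hdim}(\Stab_\Gamma(\sigma))+|\sigma|\le 6g-7=\dim\partial$, with equality exactly for pants decompositions, matching the Remark after the definition. This is (\ref{one}).

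For (\ref{stab}), let $\sigma,\tau$ be disjoint simplices. If $\sigma\cup\tau$ is again a multicurve it spans a simplex $\rho$ with $|\rho|=|\sigma|+|\tau|+1$; any mapping class preserving the curve sets of $\sigma$ and of $\tau$ preserves their union, so $\Stab_\Gamma(\sigma)\cap\Stab_\Gamma(\tau)\le\Stab_\Gamma(\rho)$ and (\ref{one}) for $\rho$ gives $\operatorname{hdim}(\Stab_\Gamma(\sigma)\cap\Stab_\Gamma(\tau))+|\sigma|+|\tau|\le\operatorname{hdim}(\Stab_\Gamma(\rho))+|\rho|-1\le\dim\partial-1$. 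The remaining case, where some curve of $\sigma$ crosses some curve of $\tau$, is the heart of the proof; I would handle it by a further cut. Pass to a finite index subgroup $G$ of $\Stab_\Gamma(\sigma)\cap\Stab_\Gamma(\tau)$ fixing every curve of $\sigma$ and of $\tau$. Cutting along $\sigma$ gives $\operatorname{hdim}(G)\le j+\operatorname{hdim}(\overline G)$ with $j=|\sigma|+1$, where $\overline G\le\mathrm{Mod}(\Sigma_\sigma)$ is the image of $G$. This image fixes the image $\overline\tau$ of $\tau$ in $\Sigma_\sigma$, which is a nonempty essential arc-and-curve system (containing an honest arc, from a curve of $\tau$ crossing $\sigma$) with at least $|\tau|+1$ components. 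The crux is the estimate $\operatorname{hdim}(\overline G)\le\operatorname{vcd}(\mathrm{Mod}(\Sigma_\sigma))-(|\tau|+1)$: cutting $\Sigma_\sigma$ further along $\overline\tau$ should lower $\operatorname{vcd}$ of the mapping class group by one per arc (the new Dehn twist compensating for each cutting curve), so stabilizing $\overline\tau$ costs at least one unit of $\operatorname{vcd}$ per component. Granting this, combining with $\operatorname{vcd}(\mathrm{Mod}(\Sigma_\sigma))=\sum_i\operatorname{vcd}(\mathrm{Mod}(S_i))\le 4g-6-2|\sigma|+c_0$ (from the proof of (\ref{one})) and $c_0\le 2g-2$ yields $\operatorname{hdim}(G)+|\sigma|+|\tau|\le 2|\sigma|+\operatorname{vcd}(\mathrm{Mod}(\Sigma_\sigma))\le 4g-6+c_0\le 6g-8<\dim\partial$.

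I expect the displayed bound $\operatorname{hdim}(\overline G)\le\operatorname{vcd}(\mathrm{Mod}(\Sigma_\sigma))-(|\tau|+1)$ in the crossing case to be the main obstacle: it requires controlling stabilizers of essential arc-and-curve systems and making the ``one unit of $\operatorname{vcd}$ per arc'' accounting precise, including degenerate configurations where cutting disconnects the surface or produces genus-zero pieces — exactly the kind of input furnished by Harer's analysis of the arc complex and the spine of Teichm\"uller space. By contrast the flag property and (\ref{one}) are routine, and (\ref{one}) is also what makes the non-crossing case of (\ref{stab}) immediate. An alternative route to the crossing case is to pass to the essential subsurface $Y\subseteq\Sigma_g$ filled by $\sigma\cup\tau$: by Nielsen--Thurston any mapping class fixing all curves of a filling system of $Y$ acts on $Y$ with finite order, so $G$ has finite image in $\mathrm{Mod}(Y)$ and $\operatorname{hdim}(G)\le|\partial Y|+\operatorname{vcd}(\mathrm{Mod}(\Sigma_g\smallsetminus Y))$, which one bounds using that a component $\Sigma_{h,s}$ of $Y$ carries at most $3h-3+s$ disjoint non-peripheral curves; there the delicate point is bookkeeping the curves of $\sigma\cup\tau$ parallel to $\partial Y$.
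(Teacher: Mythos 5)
Your plan correctly isolates what must be checked (flag property, (\ref{one}), (\ref{stab})), and your verification of (\ref{one}) by cutting along $\sigma$ and plugging in Harer's formulas is correct, as is the case of (\ref{stab}) where $\sigma\cup\tau$ is again a multicurve. The problem is the crossing case, which you rightly call the heart of the matter: the estimate you propose there, $\operatorname{hdim}(\overline G)\le\operatorname{vcd}(\mathrm{Mod}(\Sigma_\sigma))-(|\tau|+1)$, is not merely unproven but false. Take $g=2$, $\sigma=\{c\}$ a nonseparating curve, $\tau=\{b\}$ with $i(b,c)=1$. Then $\Sigma_\sigma=\Sigma_{1,2}$ with $\operatorname{vcd}(\mathrm{Mod}(\Sigma_{1,2}))=2$, so your claim would force $\operatorname{hdim}(\overline G)\le 1$. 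But $\overline\tau$ is a single arc $\overline\alpha$ joining the two punctures, and its stabilizer contains, injectively, every mapping class supported in the complement of a regular neighborhood $N$ of $\overline\alpha$ together with the two punctures: that complement is a one-holed torus, its mapping class group rel boundary is $B_3$ with homological dimension $2$, and injectivity holds because the other side of $\partial N$ is a twice-punctured disk. Moreover $\overline G$ really is the full arc stabilizer up to finite index: if $\overline\varphi$ fixes $\overline\alpha$, lift it to $\varphi\in\Stab_\Gamma^0(c)$; then $\varphi(b)=T_c^k(b)$ for some $k$ and $T_c^{-k}\varphi\in G$. So $\operatorname{hdim}(\overline G)=2$. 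The flaw in the heuristic is that although there is no twist about an arc, there is a compensating twist $T_{\partial N}$ about the boundary of a neighborhood of the arc and the punctures it ends on, which restores the lost unit of dimension; in addition, any curves of $\tau$ disjoint from $\sigma$ survive as curves in $\Sigma_\sigma$ and cost nothing, so the ``one unit per component'' accounting behind $-(|\tau|+1)$ cannot be repaired componentwise. Without that estimate your chain only yields $\operatorname{hdim}(G)+|\sigma|+|\tau|\le 6g-7+|\tau|$, which is not enough, so the crossing case of (\ref{stab}) remains a genuine gap.

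It is instructive to compare with the paper's Lemma \ref{smallstabilizers}, which organizes the estimate so that no cut-surface accounting of arcs is needed: one inducts downward on $|B|$ starting from pants decompositions, where the stabilizer is exactly the $\Z^{3g-3}$ of Dehn twists, using that deleting a curve from $B$ raises the stabilizer's homological dimension by at most one; the interaction between $A$ and $B$ enters only through a vertex-trading argument ($A$ must intersect at least $|A|$ curves of $B$, else swapping curves of $B$ for curves of $A$ produces too large a multicurve), and the resulting loss of at least $|A|$ is measured against $\operatorname{hdim}(\Stab_\Gamma(B))$ --- the twists about the intersected curves of $B$ are no longer available --- rather than against $\operatorname{vcd}$ of the cut surface. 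Your alternative sketch via the subsurface $Y$ filled by $\sigma\cup\tau$ (finite image in $\mathrm{Mod}(Y)$ by Nielsen--Thurston) is closer in spirit to a workable repair, but as you say yourself the bookkeeping of curves parallel to $\partial Y$ is exactly what still has to be done; either complete that, or restructure the crossing case along the lines of the trading-plus-downward-induction argument above.
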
 

Suppose $m\geq 4$ and $\Gamma<\SL(\mathbb Z^m)$ is a finite index torsion free subgroup. Then Theorem \ref{nonperipheral}, Proposition \ref{flagcomplexsmall} and the Margulis normal subgroup theorem imply that the only potentially compressible fillings of the Borel-Serre boundary $\partial(\Gamma\setminus\SL(\mathbb R^m)/\SO(m))$ are simply connected (see subsection \ref{finish}).  
\subsection*{Simply connected fillings}
Simply connected fillings of non-simply connected boundaries are often incompressible, and this can be detected by comparing the effect of a hypothetical compression on homotopy groups with its effect on homology groups (see Proposition \ref{simplyconnected}). Using this method, we show that simply connected fillings of $\partial((\mathbb T^2_1)^d)$ are incompressible for $d\geq 2$, and also that simply connected fillings of Borel-Serre boundaries corresponding to finite index torsion free subgroups of $\SL(\mathbb Z^{4k})$ 
 are incompressible, establishing Theorem \ref{incompressible}.  

\begin{remark}
Pettet and Souto showed \cite{pettetsouto} that Borel-Serre compactifications of finite volume, complete, nonpositively curved locally symmetric manifolds are always incompressible fillings (of their Borel-Serre boundaries). Their result was a major motivation for this paper. Their approach is to show that maximal flat tori are ``stuck'' in the interior of a locally symmetric manifold and cannot be homotoped out to the end. This does not work for moduli space because the maximal abelian subgroups are peripheral. So one has to find something else that is ``stuck'' in the interior, namely the fundamental class $e$. An interesting feature of the resulting argument turned out to be that it only depends on the boundary $\partial$ (it has to have a small singular model) and not at all on the topology of the filling. So, even for the Borel-Serre boundaries of locally symmetric manifolds $\Gamma\setminus\SL(\mathbb R^{m})/\SO(m)$ the main result of this paper is new because it applies to any filling (not just the locally symmetric one). 
\end{remark}
 
 
\subsection*{Homotopically trivial $\mathbb Z/p$-actions}
Let $(M,\partial)$ be a compact manifold-with-boundary. If $M$ is aspherical, then the compressibility question is closely related to the following question. 
\begin{question}
Does the interior $\dot{M}$ of the manifold $M$ have any $S^1$-actions? More generally, does it have any homotopically trivial $\mathbb Z/p$-actions?
\end{question}
If $M$ is an aspherical manifold with centerless fundamental group\footnote{If the fundamental group has trivial center, any homotopically trivial action on $\dot M$ lifts to the universal cover, and one can usefully apply Smith theory to the lifted action.}, then Smith theory can be used to localize the $\mathbb F_p$-topology of the interior $\dot{M}$ to the fixed point set $F$ of a homotopically trivial $\mathbb Z/p$-action. From the point of view of $\mathbb F_p$-homology, the interior $\dot{M}$ looks like a regular neighborhood of $F$. This can be used to obstruct homotopically trivial $\mathbb Z/p$-actions when the $\pi_1M$-cover of the boundary $\widetilde\partial$ has a small singular model. We do this (using the method developed in \cite{avramidi2011periodic}) when $\widetilde\partial$ is homotopy equivalent to a (possibly infinite) wedge of spheres $\vee S^{q-1}$ of a single dimension $q-1$.\footnote{This seems to be a simplifying assumption and we expect the theorem to be true without it. The examples of small singular models in this paper have $\widetilde\partial\sim\vee S^{q-1}$. If one finds natural examples without this concentration, then it might be worthwhile to try and prove a more general version of this theorem.} 
\begin{theorem}
\label{htrivial}
Suppose the pair $(\widetilde M,\widetilde\partial)$ is homotopy equivalent to $(*,\vee S^{q-1})$ and $\widetilde\partial$ has a small singular model. Suppose further that $\pi_1M$ is centerless. Then any homotopically trivial $\mathbb Z/p$-action on the interior $\dot{M}$ of $M$ is trivial. 
\end{theorem}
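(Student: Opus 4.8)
The plan is to argue by contradiction: suppose $\mathbb Z/p$ acts on $\dot M$ nontrivially but homotopically trivially. Since $\pi_1 M$ is centerless, the action lifts to the universal cover $\widetilde{\dot M}$, and because the action is homotopically trivial the lift commutes with the deck group $\pi_1 M$ up to homotopy — in fact, after the standard care with the lift, one gets a genuine $\mathbb Z/p\times\pi_1 M$-action on $\widetilde{\dot M}$ whose restriction to $\pi_1 M$ is the deck action. First I would pass to Smith theory for the lifted $\mathbb Z/p$-action: the fixed set $\widetilde F$ is nonempty (the homotopically trivial action on the acyclic-by-contractible $\widetilde{\dot M}\simeq\widetilde M\simeq *$ forces $\mathbb F_p$-acyclicity of $\widetilde F$, so in particular $\widetilde F\neq\emptyset$), it is $\pi_1 M$-invariant, and the inclusion $\widetilde F\hookrightarrow\widetilde{\dot M}$ is an $\mathbb F_p$-homology equivalence — both spaces being $\mathbb F_p$-acyclic. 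Descending, $F:=\widetilde F/\pi_1 M\subset\dot M$ carries a $\pi_1 M$-action (possibly with a smaller group acting freely — but centerlessness and the homotopy-triviality let me keep the full $\pi_1 M$ acting on $\widetilde F$), and $\mathbb F_p$-homologically $\dot M$ looks like a regular neighborhood of $F$.

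The heart of the matter is then the same mechanism as in Theorem~\ref{nonperipheral}, transported to $\mathbb F_p$-coefficients. Because $(\widetilde M,\widetilde\partial)\simeq(*,\vee S^{q-1})$, the pair $(\dot M,\text{end})$ has a distinguished fundamental homology class $e\in H_{n-q}(M;H_{q-1}(\widetilde\partial))$, and one checks (this is where the small singular model $C$ for $\widetilde\partial$ enters) that $e$ is nonzero mod $p$ and, more importantly, that the $\pi_1 M$-equivariant mod-$p$ chain structure of $\widetilde\partial$ is controlled dimension-by-dimension by the stabilizer bounds (\ref{one}) and (\ref{stab}). I would feed the $\mathbb F_p$-homology equivalence $\widetilde F\hookrightarrow\widetilde{\dot M}$ into the spectral sequence for $\widetilde\partial\to\widetilde\partial\times_{\pi_1 M}\widetilde{\dot M}\to \dot M$ (and the corresponding one with $\widetilde F$ in place of $\widetilde{\dot M}$): the small-model inequalities force the relevant $\mathbb F_p[\pi_1 M]$-homology of the boundary to survive into the interior and pair nontrivially with $e$, whereas if $F\neq\dot M$ then $F$, being of strictly smaller dimension (or more precisely having the mod-$p$ homology of a proper regular neighborhood), cannot support this pairing — exactly the ``$e$ is stuck in the interior'' phenomenon. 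Quantitatively, one runs the counting/vanishing argument of \cite{avramidi2011periodic}: the stabilizer conditions (\ref{stab}) on disjoint simplices are precisely what is needed to kill the off-diagonal contributions in the $E_2$-page so that the obstruction class lands where Smith theory says it cannot.

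The concluding step is to convert ``$\widetilde F=\widetilde{\dot M}$ mod $p$'' plus ``$\widetilde F$ closed in $\widetilde{\dot M}$'' into ``$\widetilde F=\widetilde{\dot M}$'' and hence the action is trivial: since the fixed set of a (say, locally linear, or one reduces to this case) $\mathbb Z/p$-action on a manifold is a submanifold, an $\mathbb F_p$-homology equivalence $\widetilde F\hookrightarrow\widetilde{\dot M}$ between manifolds of the same dimension forces $\widetilde F$ open, hence clopen, hence everything. I expect the main obstacle to be the bookkeeping in the second paragraph: making the spectral-sequence comparison genuinely equivariant over $\mathbb F_p[\pi_1 M]$ and verifying that the small-model inequalities (\ref{one})–(\ref{stab}) translate into the precise vanishing range that makes the obstruction survive — this is where one must be careful that passing to $\mathbb F_p$-coefficients does not destroy the dimension count, and where the centerless hypothesis is doing real work (it guarantees the lift exists and that $\pi_1 M$, not a proper quotient, acts on $\widetilde F$, so that $C$ remains a legitimate small singular model for the relevant cover). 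The details of invoking \cite{avramidi2011periodic} should be packaged as a black box once the hypotheses — small singular model, wedge-of-spheres boundary, centerless $\pi_1$ — are matched up.
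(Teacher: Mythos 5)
Your skeleton matches the paper's strategy at a high level (lift the action to $\widetilde M$ using centerlessness, apply Smith theory, then show the fundamental class $e$ is ``stuck'' so the fixed set must be everything), but the part of the argument that actually does the work is missing. The paper's proof runs as follows: the small singular model enters only through Proposition \ref{diagonal}, which makes the diagonal section of $\widetilde\partial\times_{\Gamma}\widetilde\partial\ra\partial$ a homology isomorphism in degrees $\geq n-1$ and hence gives the vanishing $H_{\geq n-q}(\partial;D)=0$; combined with $H_{n-q}(M;D)\cong\mathbb F_p$ and $H_{>n-q}(M;D)=0$ (from the spectral sequence of $\widetilde\partial\ra\widetilde\partial\times_{\Gamma}\widetilde M\ra M$) this yields $H_{>n-q}(M,\partial;D)=0$. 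The contradiction for $\dim F<n$ is then obtained from the long exact sequence relating end homology $H^e_*$, ordinary homology, and closed-support homology $H^{cl}_*$ of the fixed set $F$, using the Smith-theory isomorphisms \emph{with twisted coefficients $D$} of \cite{avramidi2011periodic}, in particular $H^{cl}_*(F;D)\cong H^{cl}_{*+(n-\dim F)}(\dot M;D)$: the degree shift kills $H^{cl}_{n-q}(F;D)$ when $\dim F<n$, forcing the $\mathbb F_p$ in $H_{n-q}(F;D)\cong H_{n-q}(\dot M;D)$ to come from the end of $F$, which maps into $H^e_{n-q}(\dot M;D)\cong H_{n-q}(\partial;D)=0$. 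Your second paragraph gestures at ``the small-model inequalities force the homology to survive and pair nontrivially with $e$'' but never states what the small model actually gives (the vanishing above), never introduces the end/closed-support homology of $F$ with $D$-coefficients, and never exhibits the diagram whose failure to commute would be the contradiction; as written it is a statement of intent rather than an argument.

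Moreover, your concluding step is circular and, as stated, false: an untwisted $\mathbb F_p$-homology equivalence $\widetilde F\hookrightarrow\widetilde{\dot M}$ between a fixed set and a contractible manifold does \emph{not} force $\widetilde F$ to have full dimension --- a single fixed point is $\mathbb F_p$-acyclic, so plain Smith theory on the contractible cover gives no contradiction at all. Equality of dimensions ($\dim F=n$) is exactly the conclusion that the twisted, closed-support Smith comparison with its degree shift $n-\dim F$ is designed to deliver, so it cannot be invoked as a separate finishing move. To repair the proposal you would need to state and use the coefficient-$D$ Smith isomorphisms for both ordinary and closed-support homology (Theorem 7 of \cite{avramidi2011periodic}), derive $H_{\geq n-q}(\partial;D)=0$ from Proposition \ref{diagonal}, and run the explicit exact-sequence comparison between $F$ and $\dot M$.
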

\subsection*{Applications to moduli spaces}
One consequence of Theorem \ref{htrivial} (using work of Ivanov) is that moduli space is a minimal orbifold\footnote{It is not a finite orbifold cover of a smaller orbifold.}. A further consequence (using work of Farb and Weinberger) is that for any complete, finite volume, Riemannian (or Finsler) metric $h$ on moduli space, the isometry group of its lift $\widetilde h$ to the universal cover is the extended mapping class group (see section \ref{mcgcorollaries}).

\subsection*{Symmetries of $\dot M$} Let $g$ be a complete Riemannian metric on the interior $\dot{M}$. Theorems \ref{nonperipheral} and \ref{htrivial} can be used to bound the isometry group of $g$ purely in terms of the fundamental group. Isometries permute loops in $\dot{M}$ and this gives a homomorphism 
\begin{equation}
\rho:\Isom(\dot{M},g)\ra\Out(\pi_1M).
\end{equation}
\begin{corollary}
\label{isometries}
Let $g$ be a complete Riemannian metric on the interior $\dot{M}$. If $(M,\partial)$ satisfies the assumptions of Theorem \ref{htrivial}, then $\rho$ is injective. In other words, $g$ has no homotopically trivial isometries. 
\end{corollary}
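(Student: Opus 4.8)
The plan is to prove that $K:=\ker\rho$ is trivial. Completeness of $g$ makes $\Isom(\dot{M},g)$ a Lie group acting properly on $\dot{M}$ (Myers--Steenrod), so the closed subgroup $K$ is itself a Lie group acting properly and effectively on $\dot{M}$. Since a continuous homomorphism from a connected Lie group to the countable group $\Out(\pi_1M)$ is trivial, $K$ contains the identity component of $\Isom(\dot{M},g)$; so it suffices to produce a contradiction from the mere existence of a nontrivial element of $K$. Note also that $M$ is aspherical (since $\widetilde M\simeq*$) and $\pi_1M$ is centerless, so every homotopically trivial self-map of $\dot{M}$ lifts to a unique $\pi_1M$-equivariant self-map of $\widetilde{\dot{M}}$; this is what lets the auxiliary cyclic and compact actions built below be analyzed on the universal cover.

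Suppose $K\neq 1$. Any nontrivial Lie group contains either a nontrivial compact subgroup or a closed subgroup isomorphic to $\mathbb Z$ or to $\mathbb R$ (for a one-parameter subgroup the closure is $\cong\mathbb R^a\times\mathbb T^b$; if $b\geq1$ one has a compact $\mathbb T$, if $b=0$ one has a closed $\cong\mathbb R$; and a nontrivial discrete group contains $\mathbb Z/p$ or a closed $\cong\mathbb Z$). So one of the following occurs inside $K$: (a) a subgroup $\cong\mathbb Z/p$ for some prime $p$, contained in a nontrivial compact subgroup; (b) a closed subgroup $\cong\mathbb Z$, say generated by $\phi$; or (c) a closed subgroup $\cong\mathbb R$. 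In case (a) we get a nontrivial $\mathbb Z/p$-action on $\dot{M}$ that is homotopically trivial (it lies in $K$), contradicting Theorem~\ref{htrivial}, whose hypotheses are exactly those assumed here. Case (a) also disposes of any homotopically trivial circle action and of $\Isom(\dot{M},g)$ having positive-dimensional compact identity component.

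In cases (b) and (c) we have $A\in\{\mathbb Z,\mathbb R\}$ acting on $\dot{M}$ properly and homotopically trivially. If $A$ had a fixed point it would lie in a compact isotropy group, putting us back in case (a); so $A$ acts with trivial stabilizers, hence freely. Form the quotient $W:=\dot{M}/A$: it inherits a complete metric and is aspherical. If $A\cong\mathbb R$ then $\dot{M}\cong W\times\mathbb R$, so $\dim W=\dim M-1$ and $\pi_1W\cong\pi_1M$; if $A\cong\mathbb Z$ then $\dot{M}\to W$ is an infinite cyclic cover, $W$ is homotopy equivalent to the mapping torus of the homotopically trivial map $\phi$, hence $W\simeq M\times S^1$ and $\pi_1W\cong\pi_1M\times\mathbb Z$ (the $\mathbb Z$ acts on $\pi_1 M$ by the inner automorphism $\phi_*$, so the extension splits as a product). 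In both cases $W$ is an open manifold of dimension $\dim M$, hence has the homotopy type of a complex of dimension $\dim M-1$.

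The main obstacle is to rule out cases (b) and (c), and this is where Theorem~\ref{nonperipheral} enters. The route I would take is to show that the free homotopically trivial $\mathbb R$- or $\mathbb Z$-symmetry allows one to build, from $W$ together with the boundary/end structure of $M$, a closed manifold $\partial'$ and a $\pi_1$-injective filling of it that still carries a small singular model --- assembled from the small singular model of $\widetilde\partial$ with the extra line or circle factor folded in --- but which is visibly compressible because the filling has the homotopy type of the open manifold $W$; this contradicts Theorem~\ref{nonperipheral}. Equivalently, one can try to argue directly that the nonzero ``fundamental class'' that Theorem~\ref{nonperipheral} attaches to $(M,\partial)$ --- the class stuck in the interior of the filling --- could be pushed out to the end in the presence of such a symmetry. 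Making this precise, and in particular verifying $\pi_1$-injectivity of the auxiliary filling and that it still admits a small singular model (the steps that genuinely use the centerlessness of $\pi_1M$ and the concentration $\widetilde\partial\sim\vee S^{q-1}$), is the substantive part of the argument; everything preceding it is formal.
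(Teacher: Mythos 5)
Your setup (Myers--Steenrod, $K=\ker\rho$ is a Lie group acting properly, and the compact case killed by Theorem \ref{htrivial} via a $\mathbb Z/p$-subgroup) is fine, but the proof has a genuine gap exactly where you acknowledge it: cases (b) and (c), a homotopically trivial isometric $\mathbb Z$- or $\mathbb R$-action, are never actually ruled out. The construction you propose --- building from $W=\dot M/A$ an auxiliary closed manifold $\partial'$ with a $\pi_1$-injective filling that ``still carries a small singular model'' but is ``visibly compressible'' --- is not carried out, and its intended contradiction is not even correctly set up. That the filling ``has the homotopy type of the open manifold $W$,'' hence of a complex of dimension $\dim M-1$, does not make it compressible: \emph{every} compact $n$-manifold with nonempty boundary has the homotopy type of an $(n-1)$-complex (a spine), so low homotopy dimension carries no content here. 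Compressibility means an actual homotopy section of $\partial'\hookrightarrow M'$, which is precisely what Theorem \ref{nonperipheral} obstructs, and nothing in your sketch produces one. Moreover, in case (c) there is no reason $W$ is the interior of a compact manifold with boundary at all, and the claim that a small singular model survives ``folding in'' a line or circle factor would require re-verifying the stabilizer-dimension inequalities (\ref{one}), (\ref{stab}) against a changed $\dim\partial$; none of this is done. (There is also a small internal slip: you state $\dim W=\dim M-1$ in case (c) and two sentences later that ``in both cases $W$ is an open manifold of dimension $\dim M$.'')

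The paper avoids the case analysis over noncompact subgroups entirely, and you could repair your argument by adopting its mechanism: Theorem \ref{nonperipheral} (for $q=0,1$ the variant of subsection \ref{variant}) supplies a compact subset $Z\subset\dot M$ that no homotopically trivial isometry can move off itself, i.e. $\psi(Z)\cap Z\neq\emptyset$ for all $\psi\in K$; since the isometric action of a complete metric is proper, the set of such $\psi$ is compact, so $K$ is a compact Lie group. Then Theorem \ref{htrivial} says $K$ has no nontrivial finite-order elements, and a nontrivial compact Lie group always has torsion, so $K=1$. This kills your cases (b) and (c) wholesale (a closed subgroup isomorphic to $\mathbb Z$ or $\mathbb R$ cannot sit inside a compact group), which is the step your proposal leaves open.
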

 
\subsection*{The obstruction $\alpha_M$}
Homotoping $M$ into its boundary is the same thing as finding a homotopy section $M\ra\partial$ of the inclusion $\partial\hookrightarrow M$. The primary obstruction to doing this as a cohomology class $\alpha_M\in H^q(M;D)$ with coefficients in the $\Gamma$-module $D:=\overline H_{q-1}(\widetilde\partial)$. 
\begin{proposition}
\label{obstructionproposition}
Under the assumptions of Theorem \ref{htrivial}, $\alpha_M\not=0$. 
\end{proposition}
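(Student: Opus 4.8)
The plan is to realize a compression of $M$ as a lifting problem whose first obstruction is $\alpha_M$, and then to identify the Poincar\'e--Lefschetz dual of $\alpha_M$ with the ``fundamental homology class'' $e$ of the remark following Theorem~\ref{nonperipheral}, which is nonzero for purely spectral-sequence reasons. Since $\widetilde M\simeq *$, the filling $M$ is aspherical, $M\simeq B\Gamma$ with $\Gamma=\pi_1 M$, and the homotopy fibre of $i\colon\partial\hookrightarrow M$ is $\widetilde\partial\simeq\vee S^{q-1}$ (using that $i$ is a $\pi_1$-isomorphism; for $q\ge3$ this holds because $\widetilde\partial$ is then simply connected, so $\widetilde\partial\ra\partial$ is the universal cover). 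A compression of $M$ is exactly a homotopy section of $i$, i.e.\ a lift of $\id_M$ through $\widetilde\partial\ra\partial\xrightarrow{\,i\,}M$; as the fibre is $(q-2)$-connected with $\pi_{q-1}(\widetilde\partial)\cong H_{q-1}(\widetilde\partial)=D$ as a $\mathbb Z\Gamma$-module (equivariant Hurewicz), the first obstruction to such a section is precisely $\alpha_M\in H^q(M;D)$ in local coefficients. So it suffices to prove $\alpha_M\ne0$, and capping with the (orientation-twisted) fundamental class $[M,\partial]\in H_n(M,\partial;\mathfrak o)$ turns this, via Poincar\'e--Lefschetz duality, into the statement that $e:=\alpha_M\cap[M,\partial]\ne0$ in $H_{n-q}(M;D\otimes\mathfrak o)$ (the twist $\mathfrak o$ is trivial in the applications).

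Next I would bring in the Serre spectral sequence of $\widetilde\partial\ra\partial\xrightarrow{\,i\,}M$, with orientation coefficients which I suppress. Since $\widetilde\partial\simeq\vee S^{q-1}$ has homology only in degrees $0$ and $q-1$, the spectral sequence has just the two rows $t=0$ and $t=q-1$; and since $M$ has nonempty boundary, $H_n(M)=0$, so there are no differentials into or out of $E^2_{n-q,q-1}=H_{n-q}(M;D\otimes\mathfrak o)$, which therefore equals $E^\infty_{n-q,q-1}$. In the induced filtration of $H_{n-1}(\partial)$ this entry is exactly $\ker\bigl(i_*\colon H_{n-1}(\partial)\ra H_{n-1}(M)\bigr)$, because the complementary quotient is detected by the edge homomorphism $i_*$. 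Now $[\partial]=\partial_*[M,\partial]$ generates $H_{n-1}(\partial)\cong\mathbb Z$, while $i_*[\partial]=i_*\partial_*[M,\partial]=0$ by exactness of the homology sequence of the pair $(M,\partial)$; hence $[\partial]$ lies in the subgroup $E^\infty_{n-q,q-1}\subseteq H_{n-1}(\partial)$, which is therefore all of $H_{n-1}(\partial)$. Thus $H_{n-q}(M;D\otimes\mathfrak o)\cong\mathbb Z$ and it carries a distinguished generator, the one corresponding to $[\partial]$; it remains only to check that this generator is $\pm e$.

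That last point is a naturality statement tying the obstruction-theoretic $\alpha_M$ to the spectral sequence: $\alpha_M$ is the transgression of the tautological class of the fibre (the identity endomorphism of $D$, viewed in $H^{q-1}(\widetilde\partial;D)^{\Gamma}$), equivalently the single family of differentials $d_q$ of the two-row spectral sequence is cap product with $\alpha_M$; and under Poincar\'e--Lefschetz duality the transgression of the fibre's fundamental class is carried to the fibrewise pushforward that produces the distinguished generator found above. Chasing $[M,\partial]$ through the relative spectral sequence of the pair $(M,\partial)$, together with relative Lefschetz duality $H^q(M;D)\cong H_{n-q}(M,\partial;D\otimes\mathfrak o)$ and the connecting map $\partial_*$, then gives $e=\alpha_M\cap[M,\partial]=\pm[\partial]\ne0$, hence $\alpha_M\ne0$.

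I expect the main obstacle to be exactly this identification: reconciling the cellular-cochain description of $\alpha_M$ (values in $\pi_{q-1}$ of the fibre, assembled by extending a section skeleton by skeleton) with the duality/spectral-sequence description of $e$, with all local-coefficient and orientation twists bookkept correctly. I would either carry the argument out relatively from the outset --- the relative fibration over $(M,\partial)$, relative Lefschetz duality, and the connecting homomorphism to $H_{n-q-1}(\partial)$ --- or else extract the non-vanishing of (the cohomological incarnation of) this class from the proof of Theorem~\ref{nonperipheral}, where it is precisely what obstructs the compression. Two remarks: although the proposition is phrased under the full hypotheses of Theorem~\ref{htrivial}, the argument above uses only the concentration $\widetilde\partial\simeq\vee S^{q-1}$ (which makes $\alpha_M$ the relevant obstruction and forces $H_{n-q}(M;D\otimes\mathfrak o)$ to have rank one) --- neither the small singular model nor the centrelessness of $\pi_1 M$ is needed here; and for $q=2$, where $\widetilde\partial\simeq\vee S^1$ is aspherical, one reads $D$ as $H_1(\widetilde\partial)=\pi_1(\widetilde\partial)^{\mathrm{ab}}$ and $\alpha_M$ as the abelianized primary obstruction, and the same argument applies.
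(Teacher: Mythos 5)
Your reduction to the two-row Serre spectral sequence correctly shows that $H_{n-q}(M;D)\cong\mathbb Z$ with a distinguished generator coming from $[\partial]$ (this is the paper's remark after Theorem \ref{nonperipheral}), but the step that is supposed to finish the proof --- the identification $\alpha_M\cap[M,\partial]=\pm(\text{that generator})$ --- is false, and your closing remark that the small singular model is not needed is the tell. Counterexample: take $M=N\times D^k$ with $N$ a closed hyperbolic manifold and $k\geq 2$. Then $\widetilde M\simeq *$, the $\pi_1M$-cover of the boundary is $\widetilde N\times S^{k-1}\simeq S^{k-1}$, so $(\widetilde M,\widetilde\partial)\sim(*,\vee S^{q-1})$ with $q=k$, and $\pi_1M=\pi_1N$ is centerless; every hypothesis of Theorem \ref{htrivial} holds except the existence of a small singular model. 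Here $H_{n-q}(M;D)\cong H_{\dim N}(N)\cong\mathbb Z$ and your spectral-sequence analysis goes through verbatim, yet $M$ visibly compresses to $N\times\{pt\}\subset\partial$, so $\alpha_M=0$. Hence no argument that uses only the wedge-of-spheres hypothesis can prove $\alpha_M\neq 0$; the statement genuinely requires the small singular model. (There is also a bookkeeping slip feeding the error: capping the \emph{absolute} class $\alpha_M\in H^q(M;D)$ with $[M,\partial]$ lands in $H_{n-q}(M,\partial;D)$, while the group your spectral sequence controls, $H_{n-q}(M;D)$, is Lefschetz dual to the \emph{relative} group $H^q(M,\partial;D)$. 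What your duality/transgression picture actually detects is the relative obstruction $\alpha_M^*\in H^q(M,\partial;D)$, whose nonvanishing is easy and holds in the counterexample as well.)

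The missing idea is exactly the passage from $\alpha_M^*\neq 0$ to $\alpha_M\neq 0$, and this is where the paper spends the small singular model: Proposition \ref{diagonal} gives the vanishing $H_{\geq n-q}(\partial;D)=0$ of (\ref{vanishequation}), Poincar\'e duality on $\partial$ turns this into $H^{q-1}(\partial;D)\cong H_{n-q}(\partial;D)=0$, and then the exact sequence $H^{q-1}(\partial;D)\ra H^q(M,\partial;D)\ra H^q(M;D)$ shows that the nonzero class $\alpha_M^*$ cannot die when mapped to absolute cohomology. In the counterexample above this is precisely what fails: $\alpha_M^*$ is nonzero but comes from $H^{q-1}(\partial;D)\neq 0$. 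So to repair your proof you would need to inject the input $H_{n-q}(\partial;D)=0$ (equivalently, that the diagonal is an $H^{\Gamma}_{\geq n-1}$-isomorphism for the small singular model) at the point where you pass from the relative to the absolute class; with that input your duality formulation becomes essentially the paper's argument.
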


\section{A computation in $\Gamma$-equivariant homology}
\label{equivariant}
In this section, homology is with coefficients in $\mathbb Z$ or $\mathbb F_p$. Give $C\times C$ the diagonal $\Gamma$-action. 
Recall that {\it $\Gamma$-equivariant homology} $H^{\Gamma}_*$ of a $\Gamma$-complex $X$ is defined as
\begin{equation}
H^{\Gamma}_*(X):=H_*(X\times_{\Gamma} E\Gamma).
\end{equation}
The goal of this section is the following Proposition.
\begin{proposition}
\label{diagonal}
If $C$ is a flag complex and for every pair of disjoint simplices $\sigma$ and $\tau$ in $C$ we have 
\begin{equation}
hdim(\Stab_{\Gamma}(\sigma)\cap\Stab_{\Gamma}(\tau))+|\sigma|+|\tau|<n-1,
\end{equation}
then the diagonal map $C\stackrel{s}\hookrightarrow C\times C$ induces an isomorphism
\begin{equation}
\label{equiv}
H_{\geq n-1}^{\Gamma}(C)\cong H_{\geq n-1}^{\Gamma}(C\times C).
\end{equation}
\end{proposition}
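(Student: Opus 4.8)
The plan is to compute everything via the isotropy spectral sequence $E^1_{p,q}=\bigoplus_{|\sigma|=p}H_q(\Stab_\Gamma\sigma)\Rightarrow H^\Gamma_{p+q}(X)$ of a $\Gamma$-CW complex $X$ (or, more generally, of a bounded-below complex of permutation $\Gamma$-modules), together with the bookkeeping principle it yields: if every cell $e$ of $X$ has $\operatorname{hdim}(\Stab_\Gamma e)+|e|<n-1$, then $H^\Gamma_j(X)=0$ for $j\ge n-1$. By the hypothesis of the Proposition this applies, with the constant $n-1$, to any $\Gamma$-complex built out of products $\sigma\times\tau$ of \emph{disjoint} simplices of $C$ (with the diagonal action, so that $\Stab_\Gamma(\sigma\times\tau)=\Stab_\Gamma\sigma\cap\Stab_\Gamma\tau$).

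\medskip
\noindent\textbf{Step 1 (reduction).} Since the diagonal $sC$ is not a subcomplex of the product cell structure, replace it by the $\Gamma$-subcomplex $P:=\bigcup_{\delta\in C}\delta\times\delta=\{\,\sigma\times\tau : \sigma\cup\tau\in C\,\}$, which contains $sC$. The straight-line homotopy $h_t(x,y)=(x,(1-t)y+tx)$ is defined on $P$ — if $x$ and $y$ lie in a common simplex $\delta$, so does $(1-t)y+tx$ — is $\Gamma$-equivariant, and deformation retracts $P$ onto $sC\cong C$; hence $H^\Gamma_*(C)\cong H^\Gamma_*(P)$. So it suffices to show $H^\Gamma_j(D_*)=0$ for $j\ge n-1$, where $D_*:=C_*(C\times C,P)$, whose cells are exactly the products $\sigma\times\tau$ with $\sigma\cup\tau\notin C$ (so that $\sigma\setminus\tau\ne\emptyset\ne\tau\setminus\sigma$).

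\medskip
\noindent\textbf{Step 2 (overlap bigrading).} Put $\rho:=\sigma\cap\tau$. The boundary of $D_*$ splits as $\partial=\partial'+\partial''$, where $\partial'$ deletes a vertex of $\rho$ (fixing $\sigma\cup\tau$, lowering $\#\rho$ by one) and $\partial''$ deletes a vertex of $\sigma\setminus\rho$ or $\tau\setminus\rho$ (fixing $\#\rho$; lowering $\#(\sigma\cup\tau)$ by one, or landing in $P$); these anticommute, so $D_*$ is the total complex of a bicomplex graded by $(\#\rho,\#(\sigma\cup\tau))$. For a fixed overlap $\rho$ (a simplex with $k$ vertices), the $\partial''$-complex of the cells with that overlap is, via $\sigma\times\tau\mapsto(\sigma\setminus\rho)\times(\tau\setminus\rho)$, identified $\Stab_\Gamma\rho$-equivariantly with $\widetilde G(\operatorname{Lk}_C\rho)[2k]$, where $\widetilde G(L)$ denotes the complex spanned by products $\alpha\times\beta$ of disjoint nonempty simplices of a flag complex $L$ with $\alpha\cup\beta\notin L$ (boundary = product boundary, discarding faces whose union becomes a simplex of $L$); summing over $\Gamma$-orbits, the overlap-$k$ part is $\bigoplus_{[\rho]}\operatorname{Ind}_{\Stab_\Gamma\rho}^{\Gamma}\widetilde G(\operatorname{Lk}_C\rho)[2k]$. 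Now $\operatorname{Lk}_C\rho$ is again flag, and $(\operatorname{Lk}_C\rho,\Stab_\Gamma\rho)$ satisfies (\ref{stab}) with $n$ replaced by $n-k$ — apply (\ref{stab}) for $C$ to pairs $(\rho\cup\alpha,\beta)$ with $\alpha,\beta$ disjoint simplices of $\operatorname{Lk}_C\rho$. Hence, by the principle above, $H^{\Stab_\Gamma\rho}_j(\widetilde G(\operatorname{Lk}_C\rho))=0$ for $j\ge n-k-1$, so $H^\Gamma_j(\text{overlap-}k\text{ part})=0$ for $j\ge n-1+k$; in particular the overlap-$0$ part $G_0=\widetilde G(C)$ has $H^\Gamma_j=0$ for $j\ge n-1$.

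\medskip
\noindent\textbf{Step 3 (the combinatorial core — the hard part).} It remains to kill the overlap-$\ge1$ part in degrees $\ge n-1$. Run the spectral sequence of the bicomplex obtained by taking $\partial'$-homology first. Since $\partial'$ fixes $V:=\sigma\cup\tau$, the $\partial'$-complex splits over (non-simplex) vertex sets $V$ into finite complexes whose $k$-chains are the pairs $(\sigma,\tau)$ of simplices of $C$ with $\sigma,\tau\subseteq V$, $\sigma\cup\tau=V$, and $\#(\sigma\cap\tau)=k$. The key lemma — which I expect to be the real obstacle, and which is exactly where flagness is essential (it fails when $V$ spans a hollow simplex, where the $k=1$ homology survives) — is that each of these complexes has homology only in overlap-degree $k=0$; I would prove it by an explicit $\Stab_\Gamma V$-equivariant chain contraction built from a single non-edge of $V$. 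Granting this in the appropriate (stabilizer-respecting) form, the $\partial'$-first spectral sequence collapses at $E_1$ onto its $k=0$ column, which is a quotient of $G_0=\widetilde G(C)$ by boundaries coming from the overlap-$1$ part; feeding the vanishing ranges of Step 2 (for $G_0$ and for the overlap-$k$ pieces with $k\ge1$) through the long exact sequences this produces yields $H^\Gamma_j(D_*)=0$ for $j\ge n-1$, which with Step 1 proves the Proposition. (An alternative is to replace $P$ by the larger subcomplex $\bigcup_v\overline{\st}(v)\times\overline{\st}(v)$, whose complementary cells are automatically products of disjoint simplices so that Steps 2–3 become unnecessary; but then one must construct a $\Gamma$-equivariant deformation retraction of that subcomplex onto $sC$, which the homotopy of Step 1 does not provide.)
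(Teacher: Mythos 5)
Your Step 1 is exactly the paper's first reduction (your $P$ is the paper's simplicial diagonal $\Delta$, with the same straight-line $\Gamma$-retraction), but from there your route diverges, and it has a genuine gap --- in fact two. First, the combinatorial heart of Step 3 (that for a fixed non-simplex vertex set $V$ the $\partial'$-complex of pairs $(\sigma,\tau)$ with $\sigma\cup\tau=V$ has homology concentrated in overlap degree $0$) is not proved: you flag it yourself as ``the real obstacle'' and offer only a guess at a contraction, and moreover you would need it in an equivariant, stabilizer-respecting form to use it the way you do. As written, the proposal is a plan, not a proof.

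Second, and more seriously, even granting that lemma the assembly in Step 3 does not follow from the vanishing ranges you have. For a cell $\sigma\times\tau$ with overlap $\rho$ of $k$ vertices, the best the hypothesis gives is $\mathrm{hdim}(\Stab_{\Gamma}(\sigma)\cap\Stab_{\Gamma}(\tau))+|\sigma|+|\tau|<n-1+k$ (apply (\ref{stab}) to the disjoint pair $(\sigma,\tau\setminus\rho)$, noting $\Stab_\Gamma(\sigma)\cap\Stab_\Gamma(\tau)$ preserves $\tau\setminus\rho$), so your Step 2 correctly yields $H^{\Gamma}_{\geq n-1+k}=0$ for the overlap-$k$ piece --- too weak by exactly $k$ for $k\geq 1$. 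Writing $Q=G_0/\partial' G_1$ for the column you collapse onto, the long exact sequence of $0\to\partial' G_1\to G_0\to Q\to 0$ shows that to kill $H^{\Gamma}_{n-1}(Q)\cong H^{\Gamma}_{n-1}(D_*)$ you need $H^{\Gamma}_{n-2}(\partial' G_1)\to H^{\Gamma}_{n-2}(G_0)$ to be injective, and chaining the sequences $0\to\partial' G_{k+1}\to G_k\to\partial' G_k\to 0$ with your ranges only gives vanishing of $H^{\Gamma}_*(\partial' G_1)$ in degrees $\geq n-2+K$ ($K$ the top overlap), which is useless; vanishing ranges alone cannot recover the loss, one needs control of specific maps. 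The paper avoids overlapping pairs altogether: it filters $C_*(C\times C,\Delta)$ by the first-factor simplex $\sigma$, so the fibers are the pairs $(C,\Delta_\sigma)$, and then uses flagness to replace $\Delta_\sigma$ by $N(\sigma)=\bigcup_{\tau\cap\sigma\neq\emptyset}\tau$ (both acyclic, so the pairs have the same $\Stab_\Gamma(\sigma)$-equivariant homology); the relative cells of $(C,N(\sigma))$ are simplices \emph{disjoint} from $\sigma$, where (\ref{stab}) applies verbatim and gives $H^{\Stab_{\Gamma}(\sigma)}_{\geq n-1-|\sigma|}(C,\Delta_\sigma)=0$, which the outer spectral sequence assembles into $H^{\Gamma}_{\geq n-1}(C\times C,\Delta)=0$ with no loss. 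To salvage your route you would need either the contraction lemma in a form strong enough to produce an equivariant quasi-isomorphism onto a complex with controlled stabilizers (not merely the quotient $Q$), or an acyclicity comparison in the spirit of the paper's $\Delta_\sigma\hookrightarrow N(\sigma)$ step.
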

\subsection*{The simplicial diagonal}
We will use the {\it simplicial diagonal} 
\begin{equation}
\Delta:=\bigcup_{\sigma}(\sigma\times\sigma).
\end{equation} 
Clearly the diagonal inclusion $s$ factors as $C\hookrightarrow\Delta\hookrightarrow C\times C$. 
Moreover, if $p_1:C\times C\ra C$ is the projection to the first factor and $p:=p_1\mid_{\Delta}$ is its restriction to the simplicial diagonal, then the fibre $p^{-1}(v)$ over a {\it vertex} of $C$ is $v\times\Delta_v$, where $\Delta_v$ is the union of all simplices in $C$ containing $v$. It can also be expressed as $\Delta_v=v*Lk(v)$. 
More generally, for any simplex $\sigma$ in $C$, the inverse image of the interior $\dot{\sigma}$ of that simplex is
\begin{equation}
p^{-1}(\dot{\sigma})=\dot{\sigma}\times\Delta_{\sigma},
\end{equation}
where $\Delta_{\sigma}$ is the union of all simplices in $C$ containing $\sigma$, and can also be expressed as
$$
\Delta_{\sigma}=\sigma*Lk(\sigma).
$$

\subsection*{Proof outline}
The proof of the Proposition consists of three steps. 
\begin{itemize}
\item
Show that the inclusion $s:C\hookrightarrow\Delta$ is an $H^{\Gamma}_*$-isomorphism,
\item
Prove the fiberwise\footnote{We call this fiberwise vanishing because $p^{-1}(x)\cap(C\times C,\Delta)=x\times(C,\Delta_{\sigma})$ where $x\in\dot{\sigma}$ is a point in the interior of $\sigma$.} vanishing result 
\begin{equation}
\label{fibervanish}
H_{\geq n-1-|\sigma|}^{\Stab_{\Gamma}(\sigma)}(C,\Delta_{\sigma}),
\end{equation}
\item
show there is a spectral sequence
\begin{equation}
\bigoplus_{|\sigma|=i}H^{\Stab_{\Gamma}(\sigma)}_{j}(C,\Delta_{\sigma})\implies H^{\Gamma}_{i+j}(C\times C,\Delta)
\end{equation}
which assembles the fiberwise vanishing (\ref{fibervanish}) into the global vanishing 
\begin{equation}
H^{\Gamma}_{\geq n-1}(C\times C,\Delta)=0.
\end{equation}
\end{itemize}
This implies that $\Delta\hookrightarrow C\times C$ is an $H^{\Gamma}_{\geq n-1}$-isomorphism, which together with the first bullet proves the Proposition. 
\subsection*{Proof of first bullet}
For a pair of points $x,y\in\Delta$, there is a unique simplex $\sigma$ so that $x\in\dot\sigma$ and $y\in\Delta_{\sigma}$. The points $y$ and $x$ are connected by a straight line segment in $\Delta_{\sigma}$. So, we can define a ``straight line'' homotopy $h_t:\Delta\ra\Delta$ with $h_0(x,y)=(x,y)$ and $h_1(x,y)=(x,x)$. This homotopy is clearly a $\Gamma$-map. Therefore, the identity map $id_{\Delta}$ is $\Gamma$-homotopic to a map that factors as a composition of $\Gamma$-maps $\Delta\stackrel{p}\ra C\stackrel{s}\hookrightarrow\Delta$, where $p=p_1\mid_{\Delta}$ is the projection to the first factor restricted to $\Delta$. On the other hand, the composition $C\stackrel{s}\hookrightarrow\Delta\stackrel{p}\ra C$ is equal to the identity map $id_C$. Therefore $s$ is a $\Gamma$-homotopy equivalence, so it induces an $H^{\Gamma}_*$-isomorphism. 
\subsection*{Proof of second bullet}
Denote by $N(\sigma)=\cup_{\sigma\cap\tau\not=\emptyset}\tau$ the union of the simplices intersecting $\sigma$. The bound on stabilizers (\ref{stab}) for disjoint simplices $\sigma$ and $\tau$ can be rewritten as
\begin{equation}
\mbox{hdim}(\Stab_{\Stab_{\Gamma}(\sigma)}(\tau))+|\tau|<n-1-|\sigma|,
\end{equation}
and this implies that for $i+j\geq n-1-|\sigma|$ we have
\begin{eqnarray*}
H_{i}(B\Stab_{\Gamma}(\sigma);C_j(C,N(\sigma)))&=&\bigoplus_{|\tau|=j}H_i(B\Stab_{\Gamma}(\sigma);\mathbb Z[\Stab_{\Gamma}(\sigma)/\Stab_{\Stab_{\Gamma}(\sigma)}(\tau)])\\
&=&\bigoplus_{|\tau|=j}H_i(B\Stab_{\Stab_{\Gamma}(\sigma)}(\tau)),\\
&=&0,
\end{eqnarray*}
where the sum is over $\Stab_{\Gamma}(\sigma)$-orbits of $j$-simplices $\tau$ that are disjoint from $\sigma$.  
Putting this into the equivariant homology spectral sequence\footnote{For a a group $\Gamma$ and $\Gamma$-complexes $Y\subset X$ this is the spectral sequence $$H_{i}(B\Gamma;C_j(X,Y))\implies H^{\Gamma}_{i+j}(X,Y)$$ of the fible bundle $(X,Y)\ra(X\times_{\Gamma}E\Gamma,Y\times_{\Gamma}E\Gamma)\ra B\Gamma$.} for the pair $(C,N(\sigma))$ gives 
\begin{equation}
\label{relative}
H^{\Stab_{\Gamma}(\sigma)}_{\geq n-1-|\sigma|}(C,N(\sigma))=0.
\end{equation}
Since $C$ is a flag complex, the union $N(\sigma)$ is acyclic. The set $\Delta_{\sigma}=\sigma*Lk(\sigma)$ is clearly a $\Stab_{\Gamma}(\sigma)$-invariant subset of $N(\sigma)$, and it is also acyclic. Therefore, the inclusion $\Delta_{\sigma}\hookrightarrow N(\sigma)$ is an $H^{\Stab_{\Gamma}(\sigma)}_*$-isomorphism. 
Together with (\ref{relative}), this implies
\begin{equation}
\label{relzeros}
H^{\Stab_{\Gamma}(\sigma)}_{\geq n-1-|\sigma|}(C,\Delta_{\sigma})=0.
\end{equation}


\subsection*{Notation}
Below, we will be looking at the {\it product chains} on $C\times C\times E\Gamma$ and various subcomplexes formed out of this. As $\Gamma$-modules, all chain complexes will be submodules of the the product $C_*(C)\otimes C_*(C)\otimes C_*(E\Gamma)$. The differentials on these complexes will always be obtained from the differentials on $C_*(C)$ and $C_*(E\Gamma)$ by forming total complexes, restricting to subcomplexes and/or taking quotients. In the proof below we will use the following convention: $C_*(X\times Y)$ will always denote the total chain complex $C_*(X\times Y)=C_*(X)\stackrel{Tot}\otimes C_*(Y)$, i.e. the complex of product chains on $X\times Y$. 

\subsection*{On computing relative equivariant homology}
Suppose that $Y$ is a $\Gamma$-subcomplex of a {\it free} $\Gamma$-complex $X$. Then the relative equivariant homology $H^{\Gamma}_*(X,Y)$ can be computed as the ordinary homology of the complex of coinvariants of the relative cellular chain complex $C_*(X,Y)_{\Gamma}$. The reason is that, when we use cellular chains, applying coinvariants to 
\begin{equation}
\label{relative}
0\ra C_*(Y)\ra C_*(X)\ra C_*(X,Y)\ra 0
\end{equation}
preserves exactness because, as a sequence of $\Gamma$-modules (not $\Gamma$-chain complexes!) the sequence (\ref{relative}) splits: the $\Gamma$-module $C_i(X)$ is a direct sum of $\Gamma$-modules $C_i(X)=C_i(Y)\oplus C_i(X,Y)$ since $C_i(X,Y)$ is generated by the $i$ cells that do not lie in $Y$. 

More generally, if $X$ is a $\Gamma$-complex that is not necessarily free, then we can compute $H^{\Gamma}_*(X,Y)$ as the homology of the complex $C_*(X\times E\Gamma,Y\times E\Gamma)_{\Gamma}$ where $C_*(X\times E\Gamma,Y\times E\Gamma)$ is the relative complex of cellular product chains.  

\subsection*{Proof of third bullet}
Let 
$$
C_*(C\times C\times E\Gamma)=C_*(C)\stackrel{Tot}\otimes (C_*(C)\stackrel{Tot}\otimes C_*(E\Gamma))
$$ 
be the complex of product chains and $C_*(\Delta\times E\Gamma)$ the subcomplex of those product chains that are supported on $\Delta\times E\Gamma$. Our goal will be to compute the $\Gamma$-equivariant homology of the pair $(C\times C,\Delta)$. 
We will do this via a double complex spectral sequence of a complex $(E_{*,*})_{\Gamma}$ described below.  

Both of the differentials of the double complex $C_*(C)\otimes C_*(C\times E\Gamma)$ preserve $C_*(\Delta\times E\Gamma)$, so it is also the total complex of a double complex which we will denote by $C_{*,*}(\Delta\times E\Gamma)$. The (single) complex $C_{i,*}(\Delta\times E\Gamma)$ decomposes as a direct sum 
\begin{equation}
\label{decomp}
C_{i,*}(\Delta\times E\Gamma)=\bigoplus_{|\sigma|=i}C_*(\Delta_{\sigma}\times E\Gamma)
\end{equation}
over $i$-simplices of $C$, because a product $\sigma\otimes\alpha\otimes\beta\in C_i(C)\otimes C_*(C\times E\Gamma)$ that lies in $C_{i,*}(\Delta\times E\Gamma)$ is precisely a product whose second factor $\alpha$ is supported on $p^{-1}(x)=\Delta_{\sigma}$, where $x\in\dot{\sigma}$ is a point in the interior of $\sigma$.
Now, take the quotient double complex 
\begin{equation}
\label{double}
E_{*,*}:={C_*(C)\otimes C_*(C\times E\Gamma)\over C_{*,*}(\Delta\times E\Gamma)}.
\end{equation}
The two bullets below follow directly from (\ref{decomp}) and the definition (\ref{double}).
\begin{itemize}
\item
$E_{i,*}$ decomposes as a direct sum of relative chain complexes
$$
E_{i,*}=\bigoplus_{|\sigma|=i}C_*(C\times E\Gamma,\Delta_{\sigma}\times E\Gamma).
$$ 
\item
The total complex of $E_{*,*}$ is $Tot(E_{*,*})=C_*(C\times C\times E\Gamma,\Delta\times E\Gamma)$. 
\end{itemize}

%


Next, we look at the $\Gamma$-action on $E_{*,*}$. 
\begin{itemize}
\item
The $\Gamma$-coinvariants of $E_{*,*}$ are the double complex with $(i,j)$-th entry
\begin{eqnarray}
(E_{i,j})_{\Gamma}&=&\bigoplus_{|\sigma|=i}C_j(C\times E\Gamma,\Delta_{\sigma}\times E\Gamma)_{\Stab_{\Gamma}(\sigma)}.
\end{eqnarray}
where the sum is now over {\it $\Gamma$-orbits} of $i$-simplices $\Gamma\cdot\sigma$ in $C/\Gamma$.
\item
The total complex of $(E_{*,*})_{\Gamma}$ is (obviously) the same as the $\Gamma$-invariants of the total complex of $E_{*,*}$, so  
\begin{eqnarray}
Tot((E_{*,*})_{\Gamma})&=&(Tot(E_{*,*}))_{\Gamma},\\
&=&C_*(C\times C\times E\Gamma,\Delta\times E\Gamma)_{\Gamma}.
\end{eqnarray}
\end{itemize}
So, the\footnote{Actually, one of the two spectral sequences associated to this double complex.} spectral sequence associated to $(E_{*,*})_{\Gamma}$ has the form
\begin{equation}
\label{doublespectralseuqnece}
(E_{i,j})^1_{\Gamma}=\bigoplus_{|\sigma|=i}H_j^{\Stab_{\Gamma}(\sigma)}(C,\Delta_{\sigma})\implies H^{\Gamma}_{i+j}(C\times C,\Delta).
\end{equation}
We conclude from the second bullet that the left hand side of the spectral sequence (\ref{doublespectralseuqnece}) vanishes for $i+j\geq n-1$ and consequently 
\begin{equation}
H^{\Gamma}_{\geq n-1}(C\times C,\Delta)=0.
\end{equation}
This finishes the proof of the Proposition.

\section{No homotopy section (Proof of Theorem \ref{nonperipheral})} 
Next, we use Proposition \ref{diagonal} to obstruct homotopy sections of $\partial\hookrightarrow M$.  

\begin{proposition}
\label{nosection}
Suppose $\widetilde\partial\ra\partial$ is a connected $\Gamma$-cover and let $\partial\hookrightarrow M$ be a $\pi_1$-injective filling. If $\widetilde\partial$ has a small singular model $C$, then $\partial\hookrightarrow M$ has no homotopy section. 
\end{proposition}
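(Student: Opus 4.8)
The plan is to suppose, for contradiction, that a homotopy section $r: M \to \partial$ of the inclusion $\iota: \partial \hookrightarrow M$ exists, and to derive a contradiction with Proposition \ref{diagonal}. First I would set up the equivariant framework. Let $\widetilde\partial \to \partial$ be the connected $\Gamma$-cover with small singular model $C$, so there is a $\Gamma$-homotopy equivalence $h: \widetilde\partial \to C$. Because the filling is $\pi_1$-injective, the image of $\pi_1\partial$ in $\pi_1 M$ maps onto a subgroup, and after passing to the cover $\widetilde M \to M$ corresponding to $\ker(\pi_1\partial \to \pi_1 M) \cdot (\text{preimage})$ — more precisely, to the pullback cover — one gets a $\Gamma$-space $\widetilde M$ containing $\widetilde\partial$ as a $\Gamma$-subspace, and a $\Gamma$-equivariant retraction $\widetilde r : \widetilde M \to \widetilde\partial$. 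The composite $\widetilde\partial \hookrightarrow \widetilde M \xrightarrow{\widetilde r} \widetilde\partial$ is $\Gamma$-homotopic to the identity. Applying the Borel construction $- \times_\Gamma E\Gamma$ turns this into spaces homotopy equivalent to $\partial$, $M$, and the statement becomes: the inclusion $\partial \hookrightarrow M$ admits a homotopy section compatible with all of this.

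The key idea is that a homotopy section gives, via $h$, a map that collapses the diagonal. Concretely, consider the two $\Gamma$-maps $\widetilde\partial \to C$: one is $h$ itself, and the other is $h \circ \widetilde r \circ (\text{inclusion})$ — but these agree. The real leverage comes from considering the map $\widetilde M \to C \times C$ given by $(h \circ \widetilde r, h \circ \widetilde r)$ versus trying to factor something through $C \xrightarrow{s} C \times C$. Here is the mechanism I expect to work: a homotopy section $r$ of $\iota$ means $\iota$ is split up to homotopy, so on $\Gamma$-equivariant homology $H^\Gamma_*(\widetilde\partial) \to H^\Gamma_*(\widetilde M)$ is a split injection. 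Now one builds a self-map or a comparison using the diagonal. The cleanest route: the section lets us construct a $\Gamma$-map $\mu: \widetilde\partial \to \widetilde\partial \times \widetilde\partial$ (roughly $x \mapsto (x, \widetilde r(\iota'(x)))$ composed through a collar/homotopy) that is $\Gamma$-homotopic both to the diagonal $s$ and to something that does \emph{not} hit the diagonal's high-dimensional class — producing via Proposition \ref{diagonal} (which says $s_*$ is an iso on $H^\Gamma_{\geq n-1}$, where $n-1 = \dim\partial$) a contradiction with the fact (from the Remark after the definition of small singular model) that $H^\Gamma_{n-1}(C) = H_{n-1}(\partial) \neq 0$, since $\partial$ is a closed orientable $(n-1)$-manifold and this top class must survive.

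More precisely, I would argue as follows. The fundamental class $[\partial] \in H_{n-1}(\partial) = H^\Gamma_{n-1}(C)$ is nonzero. Under $s_*$ it maps isomorphically to a class in $H^\Gamma_{n-1}(C \times C)$. On the other hand, a homotopy section $r: M \to \partial$, together with the collar structure of $(M,\partial)$, produces a homotopy $\partial \times [0,1] \to M$ from $\iota$ to $\iota \circ r \circ \iota$, hence a way to see the pushforward of $[\partial]$ under $\iota_* : H_{n-1}(\partial) \to H_{n-1}(M)$ as lying in the image of $r^* $-related maps; equivariantly this lets one express the image of $s_*[\partial]$ in $H^\Gamma_*(C \times C)$ as supported away from $\Delta$ — i.e. coming from $H^\Gamma_*(C \times C, \Delta) \cong H^\Gamma_*((C\times C)\setminus\text{nbhd})$ in degree $n-1$, which is forced to be zero by the third bullet's conclusion $H^\Gamma_{\geq n-1}(C \times C, \Delta) = 0$ combined with $H^\Gamma_{\geq n-1}(C) \cong H^\Gamma_{\geq n-1}(C\times C)$. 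Thus $s_*[\partial] = 0$, contradicting injectivity of $s_*$ and $[\partial] \neq 0$.

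The main obstacle, and where I would spend the most care, is the precise construction of the $\Gamma$-equivariant map out of the hypothetical homotopy section that realizes ``the fundamental class gets pushed off the diagonal.'' This requires handling the non-$\pi_1$-injective-in-general situation correctly — using the $\pi_1$-injectivity hypothesis to lift the section and the collar homotopy to the right covers equivariantly — and making sure the homotopy that moves $\iota$ to $\iota \circ r \circ \iota$ can be chosen $\Gamma$-equivariantly (it can, since everything downstairs lifts because the maps respect the $\Gamma = \pi_1\partial / (\text{stuff})$ actions, or one passes to a cover where $\Gamma$ acts freely). A secondary technical point is identifying $H^\Gamma_{n-1}(\widetilde M) $ appropriately and checking that $[\partial]$ really is detected — here one uses that $(\widetilde M, \widetilde\partial) \simeq (*, C)$ is not needed in this generality, only that $h$ is a $\Gamma$-equivalence $\widetilde\partial \simeq C$, so $H^\Gamma_*(\widetilde\partial) = H_*(\partial)$ and the fundamental class is literally the generator of $H_{n-1}(\partial)$.
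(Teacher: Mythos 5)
Your overall strategy --- derive a contradiction from Proposition \ref{diagonal} by comparing the diagonal class of $\partial$ with a map built from the hypothetical compression --- is the same as the paper's, but the two steps that carry your argument do not hold as stated. First, the map $\mu:x\mapsto(x,\widetilde r\widetilde\iota(x))$ is \emph{not} $\Gamma$-homotopic to the diagonal inside $\widetilde\partial\times\widetilde\partial$: a compression only gives $\iota\circ r\simeq \mathrm{id}_M$, so the homotopy from $\iota$ to $\iota\circ r\circ\iota$ lives in $M$, and $\delta$ and $\mu$ become homotopic only after composing with $i:\widetilde\partial\times_\Gamma\widetilde\partial\to\widetilde\partial\times_\Gamma\widetilde M$. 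To transfer any conclusion back to $\widetilde\partial\times_\Gamma\widetilde\partial$ (equivalently to $H^\Gamma_{n-1}(C\times C)$) you need $i_*$ to be injective in degree $n-1$; this is exactly the step the paper extracts from Proposition \ref{diagonal} ($\delta_*$ is an isomorphism because $s_*$ is) combined with the observation that $x\mapsto(x,x)$ is a section of the bundle $\widetilde M\to\widetilde\partial\times_\Gamma\widetilde M\to\partial$, so that $i_*\delta_*$ is injective. Your sketch never supplies this. (A smaller slip: a homotopy section makes $\iota_*$ split \emph{surjective}, with right inverse $r_*$; the split injection you assert goes the wrong way.)

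Second, and more seriously, your concluding mechanism is not a valid deduction. The vanishing $H^\Gamma_{\geq n-1}(C\times C,\Delta)=0$ says that the diagonal carries all of $H^\Gamma_{n-1}(C\times C)$ (the map induced by $\Delta\hookrightarrow C\times C$ is onto in those degrees); it does not say that a class ``supported away from $\Delta$'' vanishes, the relative group is not identified with the homology of a complement (no excision argument is available here), and in any case there is no reason the image of $x\mapsto(x,r\iota(x))$ misses the diagonal, since $r\circ\iota$ may have many fixed points. The genuine source of the contradiction in the paper is an input your sketch never uses: $\iota_*[\partial]=0$ in $H_{n-1}(M)$ because $\partial$ bounds $M$. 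Concretely, once $i_*$ is known to be injective, the compression shows it is also surjective (via $(x,y)\mapsto(x,\widetilde r y)$), so all the groups in question are $\mathbb Z$ and the maps $(x,y)\mapsto(x,\widetilde r\widetilde\iota y)$ and $(x,y)\mapsto(\widetilde r\widetilde\iota x,y)$ are $H_{n-1}$-isomorphisms of $\widetilde\partial\times_\Gamma\widetilde\partial$; hence the composite $x\mapsto(\widetilde r\widetilde\iota x,\widetilde r\widetilde\iota x)$ is an $H_{n-1}$-isomorphism, yet it factors through $M$, where the fundamental class dies --- that is the contradiction. Without this bounding input (or something replacing it), ``pushing off the diagonal'' cannot yield $s_*[\partial]=0$.
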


\begin{proof}
Let $C$ be a small singular model for $\widetilde\partial$. Then there is a homotopy equivalence $h:\widetilde\partial\ra C$ that is also a $\Gamma$-map. Such a map induces an $H^{\Gamma}_{*}$-isomorphism 
\begin{equation}
H_{*}(\partial)\cong H^{\Gamma}_*(\widetilde\partial)\stackrel{h_*}\cong H^{\Gamma}_*(C).
\end{equation} 
The product map $h\times h:\widetilde\partial\times\widetilde\partial\ra C\times C$ is also a homotopy equivalence and a $\Gamma$-map, so it induces an $H^{\Gamma}_{*}$-isomorphism
\begin{equation}
H_*(\widetilde\partial\times_{\Gamma}\widetilde\partial)\cong H^{\Gamma}_*(\widetilde\partial\times\widetilde\partial)\stackrel{(h\times h)_*}\cong H^{\Gamma}_*(C\times C).
\end{equation}

Now, let 
\begin{eqnarray*}
\delta:\partial&\ra&\widetilde\partial\times_{\Gamma}\widetilde\partial,\\
x&\mapsto&(x,x)
\end{eqnarray*} be the diagonal map. Look at the commutative diagram 
\begin{equation}
\begin{array}{ccccccc}
H_{n-1}^{\Gamma}(C\times C)&\cong&H_{n-1}(\widetilde\partial\times_{\Gamma}\widetilde\partial)&\stackrel{i_*}\ra&H_{n-1}(\widetilde\partial\times_{\Gamma}\widetilde M)\\
\tiny{s_*}\uparrow&&\tiny{\delta_*}\uparrow&\nearrow&\\
H_{n-1}^{\Gamma}(C)&\cong&H_{n-1}(\partial)&&\\
&&||&&\\
&&\mathbb Z&&
\end{array}
\end{equation}
The map $s_*$ is an isomorphism by Proposition \ref{diagonal}, so $\delta_*$ is also an isomorphism. The composition $i_*\delta_*$ is injective because it is induced by a section of the bundle $\widetilde M\ra\widetilde\partial\times_{\Gamma}\widetilde M
\ra\partial$. Since $\delta_*$ is an isomorphism, the map $i_*$ is also injective. 

Suppose there is a homotopy section $c:M\ra\partial$ of $\partial\hookrightarrow M$. Then $i_*$ is surjective (because it has a section of the form $(x,y)\mapsto(x,cy)$) 
 and then all homology groups in the diagram above are isomorphic to $\mathbb Z$. So, the map 
\begin{eqnarray*}
\widetilde\partial\times_{\Gamma}\widetilde\partial&\ra&\widetilde\partial\times_{\Gamma}\widetilde\partial,\\
(x,y)&\mapsto&(x,cy),
\end{eqnarray*} 
is an $H_{n-1}$-isomorphism, and hence the flipped map $(x,y)\mapsto(cx,y)$ is, as well\footnote{Because $(x,y)\mapsto(y,x)$ is an isomorphism.}. So, the composite $x\mapsto(x,x)\mapsto(x,cx)\mapsto(cx,cx)$ is an $H_{n-1}$-isomorphism. But, this composite map factors as $\partial\hookrightarrow M\ra\widetilde\partial\times_{\Gamma}\widetilde\partial$, so it is zero on $H_{n-1}$ because $\partial$ bounds $M$. This gives a contradiction, so there is no homotopy section $c$.   
\end{proof}

Proposition \ref{nosection} is a restatement of Theorem \ref{nonperipheral}, so we are now done with the proof of that theorem. 
\subsection{\label{variant}A slight variant} Note that the same argument proves the following variant of Theorem \ref{nonperipheral}: {\it If $\partial$ is connected, $(M,\partial)$ is a filling and the $\pi_1M$-cover $\widetilde\partial$ of $\partial$ has a small singular model, then $(M,\partial)$ is incompressible.} The point here is that $\partial\hookrightarrow M$ doesn't need to be $\pi_1$-injective as long as the cover $\widetilde\partial\ra\partial$ that has a small singular model comes from the filling $M$. 

\section{The curve complex is small (Proof of Proposition \ref{curvecomplexsmall})}
We need to verify inequality (\ref{stab}) for the curve complex $C$. Let $\Stab^0_{\Gamma}(\sigma)$ be the {\it pointwise} stabilizer of a simplex $\sigma$ in $C$. It is a finite index subgroup of the stabilizer $\Stab_{\Gamma}(\sigma)$. Therefore, $\Stab^0_{\Gamma}(\sigma)\cap\Stab^0_{\Gamma}(\tau)$ is a finite index subgroup of $\Stab_{\Gamma}(\sigma)\cap\Stab_{\Gamma}(\tau)$. Since the group $\Gamma$ is torsion-free, these two groups have the same homological dimension. So, we can use poinwise stabilizers when verifying (\ref{stab}). In the case of the curve complex the inequality for pointwise stabilizers we need to prove has the following form.  
\begin{lemma}
\label{smallstabilizers}
Let $A$ be a collection of disjoint simple closed curves and $B$ another collection of disjoint simple
closed curves, such that no two curves in $A\cup B$ are homotopic. Then
\begin{equation}
\label{ccstab}
\mathrm{hdim}(\mathrm{Stab}_{\Gamma}(A\cup B))+|A|-1+|B|-1<6g-7.
\end{equation}
\end{lemma}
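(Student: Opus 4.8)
The plan is to estimate the homological dimension of $\mathrm{Stab}_\Gamma(A\cup B)$ in terms of the complexity of the subsurface of $\Sigma_g$ cut along the curves in $A\cup B$, and then verify the arithmetic inequality. Let $A\cup B$ consist of $k$ distinct (up to homotopy) disjoint simple closed curves; write $|A|-1=a$ and $|B|-1=b$ for the number of curves in $A$ and $B$ respectively, so the number of distinct curves is $k\le a+b$ (it can be strictly smaller because curves in $A$ may coincide with curves in $B$). Cutting $\Sigma_g$ along these $k$ curves produces a (possibly disconnected) surface with a certain number of connected pieces and total complexity; the pointwise stabilizer $\mathrm{Stab}^0_\Gamma(A\cup B)$ — which by the discussion preceding the lemma has the same homological dimension as $\mathrm{Stab}_\Gamma(A\cup B)$ since $\Gamma$ is torsion free — is commensurable with a subgroup of the mapping class group of the cut surface times the free abelian group $\mathbb Z^{k}$ generated by Dehn twists about the $k$ curves.

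First I would recall Harer's computation of the virtual cohomological dimension of mapping class groups: for a surface of genus $h$ with $s\ge 1$ punctures/boundary components, $\mathrm{vcd}=4h-4+s$ if $h>0$, and $\mathrm{vcd}=s-3$ if $h=0$ (and one must treat the small exceptional cases, e.g.\ pants and annuli, by hand, where the relevant mapping class group is finite or trivial). For a torsion-free finite-index subgroup this vcd is the honest homological dimension. The cohomological dimension is additive under direct products, so $\mathrm{hdim}(\mathrm{Stab}_\Gamma(A\cup B)) \le k + \sum_j \mathrm{vcd}(\mathrm{MCG}(R_j))$, where $R_j$ are the components of $\Sigma_g$ cut along $A\cup B$. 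The next step is the bookkeeping: if $\Sigma_g$ is cut along $k$ disjoint non-homotopic simple closed curves into pieces $R_1,\dots,R_m$, then an Euler-characteristic count gives $\sum_j \chi(R_j) = \chi(\Sigma_g) = 2-2g$, and each $R_j$ has at least one boundary circle (in fact the boundary circles total $2k$), which lets one bound $\sum_j \mathrm{vcd}(\mathrm{MCG}(R_j))$ linearly in $g$ and $k$. Combining, $\mathrm{hdim}(\mathrm{Stab}_\Gamma(A\cup B)) + a + b$ should come out to be at most something like $6g-8$ or $6g-7$, and one needs the strict inequality $<6g-7$, so the count must be done carefully and the borderline configurations (e.g.\ a pants decomposition, $k=3g-3$) checked explicitly.

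The main obstacle I expect is exactly this extremal bookkeeping: making the Euler-characteristic/boundary-count argument tight enough to land strictly below $6g-7$ in every case, including when $A$ and $B$ share curves (so $k<a+b$, which only helps) and when the cut surface has many annular or pants pieces contributing little to the vcd but whose Dehn twists still contribute to the $\mathbb Z^k$ factor. Concretely, the worst case should be a maximal curve system (pants decomposition) where $a+b$ is as large as possible, namely $a=b=3g-3$ with $A=B$ so $k=3g-3$; then $\mathrm{hdim}=k=3g-3$ (all pieces are pants, with finite mapping class group) and $\mathrm{hdim}+a+b = (3g-3)+(3g-3)+(3g-3)=9g-9$, which is \emph{not} $<6g-7$ for large $g$ — so in fact the shared-curve case forces one to observe that when a curve lies in both $A$ and $B$ it is counted in \emph{both} $|A|$ and $|B|$ but the relevant geometric object is still just the $k$-curve system, meaning the correct statement must use that $a+b$ cannot independently be large while $k$ is small unless there is genuine overlap, and overlap is penalized by\ldots hmm. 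Let me reconsider: I would instead handle the overlap by splitting $A\cup B$ into the curves in $A\cap B$, $A\setminus B$, $B\setminus A$ and track each. The cleanest route is likely: reduce to $A\cap B=\emptyset$ by noting that a curve in both contributes $+1$ to $|A|-1$ and $+1$ to $|B|-1$ but the inequality one really has control over is in terms of the disjoint union, so one proves the sharper bound $\mathrm{hdim}(\mathrm{Stab}_\Gamma(A\cup B)) + 2|A\cup B| \le 6g - 6$ (with equality only for pants decompositions) and then uses $|A|-1+|B|-1 \le 2|A\cup B| - 2 < 2|A\cup B|$ when $A\cap B\ne\emptyset$ forces a strict drop, or when $A\cap B=\emptyset$ uses $|A|-1+|B|-1 = |A\cup B|-2 < 2|A\cup B|-2$. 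Either way the strict inequality in \eqref{ccstab} follows, and the heart of the matter is the sharp bound $\mathrm{hdim}(\mathrm{Stab}_\Gamma(W)) \le 6g-6-2|W|$ for a disjoint curve system $W$, proved by induction on $|W|$ using Harer's vcd and the Euler characteristic count.
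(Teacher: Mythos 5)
Your reading of the hypothesis is off, and this breaks the argument at its root. The lemma does \emph{not} assume that the curves of $A$ are disjoint from the curves of $B$: $A$ is a disjoint system, $B$ is a disjoint system, and no two curves of $A\cup B$ are homotopic (so in particular $A$ and $B$ share no curve, contrary to your bookkeeping with ``$k<a+b$'' and your ``worst case $A=B$''), but a curve of $A$ may perfectly well \emph{intersect} a curve of $B$. This is exactly the situation produced by two disjoint simplices $\sigma,\tau$ of the curve complex, which is what inequality (\ref{stab}) requires. Your entire scheme --- cut $\Sigma_g$ along $A\cup B$, apply Harer's vcd to the pieces plus a $\mathbb Z^{k}$ of twists --- only makes sense when $A\cup B$ is an embedded disjoint system, so it addresses only that subcase and says nothing about the configurations where the joint stabilizer is small precisely because curves of $A$ cross curves of $B$. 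The paper's proof lives in that intersecting case: it first shows by downward induction from a pants decomposition that $\mathrm{hdim}(\mathrm{Stab}_\Gamma(B))+|B|\leq 6g-6$ for a single disjoint system $B$, and then uses a vertex--trading count --- $A$ must intersect at least $|A|$ curves of $B$, since otherwise swapping the curves of $B$ that meet $A$ for the curves of $A$ would produce a larger disjoint system --- so that the Dehn twists about those intersected curves are lost and $\mathrm{hdim}(\mathrm{Stab}_\Gamma(A\cup B))\leq\mathrm{hdim}(\mathrm{Stab}_\Gamma(B))-|A|$. Nothing in your proposal plays the role of this intersection count, and it is the heart of the lemma.

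Even within the disjoint subcase your arithmetic does not close. The ``sharper bound'' you end with, $\mathrm{hdim}(\mathrm{Stab}_\Gamma(W))+2|W|\leq 6g-6$ with equality only at pants decompositions, is false: for a pants decomposition the stabilizer is (up to finite index) $\mathbb Z^{3g-3}$, so the left side is $(3g-3)+2(3g-3)=9g-9>6g-6$ for $g\geq 2$. The bound that is actually true and is all you need there has coefficient one, $\mathrm{hdim}(\mathrm{Stab}_\Gamma(W))+|W|\leq 6g-6$: equality holds at a pants decomposition, and deleting a curve raises $\mathrm{hdim}$ by at most one while lowering $|W|$ by one. Applied to $W=A\cup B$ when the two systems happen to be disjoint from each other, this gives $\mathrm{hdim}+(|A|-1)+(|B|-1)\leq 6g-8<6g-7$; but for intersecting $A$ and $B$ you still need the twist-killing argument above, so the proposal has a genuine gap rather than a fixable bookkeeping slip.
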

For the proof we recall the virtual homological dimension of various mapping class groups, which was computed by Harer in \cite{harer}. Let $d(g,r,s)$ be the virtual homological dimension of a connected genus $g$ surface with $r$ punctures\footnote{For purposes of mapping class groups, a puncture is the same as a boundary component that is not fixed.} and $s$ boundary components. For $2g+s+r>2$
\begin{eqnarray}
d(g,0,0)&=&4g-5,\\
d(0,r,s)&=&(2r+s)-3,\\
d(g,r,s)&=&4g-4+(2r+s) \mbox{ if } g>0,r+s>0.
\end{eqnarray}
\begin{itemize}
\item
If $\Gamma^s_{g,r}$ is the mapping class group of a genus $g$ surface with $s$ punctures and $r$ boundary components, then the stabilizer of a curve is either isomorphic to $\Gamma^{s+1}_{g-1,r+1}$ (if the curve is not separating) or to $\Gamma^{s_1+1}_{g_1,r_1}\times\Gamma^{s_2}_{g_2,r_2+1}$ where $g=g_1+g_2,s=s_1+s_2,$ and $r=r_1+r_2$ (if the curve is separating).  
\end{itemize}
\begin{proof}
The proof consists of several steps. First, we will
show the inequality in the case $|A|=0.$
We induct on $B,$ starting with the case when $|B|=3g-3$
is as large as possible. In this case, the curves in $B$
form a pair of pants decomposition of the surface and the stabilizer
of $B$ is the free abelian group $\mathbb Z^{3g-3}$ consisting
of Dehn twists about the curves in $B$.
Thus, 
\begin{equation}
\mbox{hdim}(\mbox{Stab}\{B\})+|A|-1+|B|-1=(3g-3)+0-1+(3g-3)-1=6g-8<6g-7.
\end{equation}
Moreover, if we remove a curve from $B,$ the homological 
dimension of the stabilizer increases at most by one,
so the inequality follows.

Now, we will deal with the case $|B|=3g-3$ and any $A$.
In this situation, the set $A$ intersects at least $|A|$
curves of $B.$ (If $A$ intersects fewer than $|A|$ curves
of $B$, then we can replace the curves of $B$ by the curves
of $A$ and get more than $3g-3$ simple closed curves on the surface,
which is a contradiction.)
Thus, the stabilizer of $A\cup B$ is at most $\mathbb Z^{3g-3-|A|}$
(since the Dehn twists about curves that intersect $A$ are no longer in the
stabilizer.) Again, this gives the inequality we need.

Finally, we do the general case by inducting on $|B|$ from the top.
Note that we can assume that $A$ intersects at least $|A|$ curves of
$B$ (if not, then replacing the curves of $B$ by curves of $A$ we get 
a set $B'$ with more curves than $B$.)
Thus, again the stabilizer of $A\cup B$ has homological dimension $\leq\mbox{hdim}(\mbox{Stab}_{\Gamma}(B))-|A|$,
and the inequality follows.
\end{proof} 

\section{The complex of flags in $\mathbb Q^{m}$ is a small singular model (Proof of Proposition \ref{flagcomplexsmall})}
 
Let $C$ be simplicial complex whose $(r-1)$-simplices are flags $V_1\subset\dots\subset V_r$ in $\mathbb Q^m$. It is a spherical building modeled on $(S^{m-2},W)$, where $W=S_m$ is the symmetric group acting on the sphere $S^{m-2}=\{(x_1,\dots,x_m)\in\mathbb R^m\mid x_1^2+\dots+x_m^2=1, x_1+\dots+x_m=0\}$ by permuting coordinates. For any pair of simplices $\sigma$ and $\tau$, there is an $(m-2)$-sphere $S$ in $C$ containing both of them. The fundamental domain for the $W$-action on this sphere $S$ is a spherical $(m-2)$-simplex given by a complete flag
\begin{equation}
\label{flag}
\left<e_1\right>\subset\left<e_1,e_2\right>\subset\dots\subset\left<e_1,\dots,e_{m-1}\right>.
\end{equation} 
A flag is {\it standard} if it is a subflag of (\ref{flag}). 
Denote by $\Stab(E_*)$ the stabilizer in $\GL(\mathbb R^m)$ of the flag $E_*\otimes_{\mathbb Q}\mathbb R$. If $E_*$ is a standard flag then, in the ordered basis $\{e_1,\dots,e_m\}$ it consists of block upper triangular matrices.

\begin{lemma}
\label{upper}
Suppose $F_1\subset\dots\subset F_r$ is a standard flag, $w\in W$ is a permutation and $wF_k\not= F_k$ for every $k$. Then, there are at least $r$ matrix entries above the diagonal that are identically zero on $\Stab(wF_*)$.\footnote{In other words, there are at least $r$ pairs $i<j$ so that $g_{i,j}=0$ for all $g\in\Stab(wF_*)$.} 
\end{lemma}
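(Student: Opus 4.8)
The plan is to translate the lemma into a count of inversions of an auxiliary ``level function'' and then to prove that count by grouping the flag levels into blocks.

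First I would set up notation. Since $F_*$ is standard, $F_k=\langle e_1,\dots,e_{d_k}\rangle$ for some integers $1\le d_1<\dots<d_r\le m-1$, and $wF_k$ is the coordinate subspace spanned by $\{e_j:j\in S_k\}$, where $S_k:=w(\{1,\dots,d_k\})$; thus $S_1\subsetneq\dots\subsetneq S_r$ and $|S_k|=d_k$. A matrix $g\in\GL(\mathbb R^m)$ preserves the coordinate subspace spanned by $\{e_j:j\in S\}$ exactly when $g_{i,j}=0$ for all $j\in S$ and $i\notin S$; hence $g\in\Stab(wF_*)$ forces $g_{i,j}=0$ whenever $j\in S_k$ and $i\notin S_k$ for some $k$. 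Consequently the number of above-diagonal entries vanishing identically on $\Stab(wF_*)$ is at least
\[
N\ :=\ \#\bigl\{(i,j)\ :\ i<j,\ \exists k\ \text{with}\ j\in S_k,\ i\notin S_k\bigr\},
\]
so it suffices to prove $N\ge r$.

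Next I would reinterpret $N$. Define $\mu\colon\{1,\dots,m\}\to\{1,\dots,r+1\}$ by $\mu(x):=\min\{k:x\in S_k\}$, with the convention $\mu(x):=r+1$ when $x\notin S_r$; then $\mu^{-1}(\{1,\dots,k\})=S_k$. Since $j\in S_k\iff\mu(j)\le k$ and $i\notin S_k\iff\mu(i)>k$, the condition defining $N$ simplifies to $\mu(j)<\mu(i)$, so $N$ equals the number of inversions of the word $\mu(1)\mu(2)\cdots\mu(m)$. Moreover the hypothesis $wF_k\ne F_k$ is the same as $S_k\ne\{1,\dots,d_k\}$, i.e.\ the statement that $A_k:=\mu^{-1}(\{1,\dots,k\})$ is \emph{not} an initial segment of $\{1,\dots,m\}$ for every $k\le r$. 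Thus the lemma reduces to the combinatorial claim: if every $A_k$ ($k\le r$) fails to be an initial segment, then $\mu$ has at least $r$ inversions.

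The naive attempt---each non-initial $A_k$ exhibits an inversion---only gives $N\ge1$, since one inversion can simultaneously witness the non-initiality of many consecutive $A_k$; resolving this overlap is the single real difficulty, and I would handle it by bookkeeping with the positions $y_k:=\max A_k$. From $|A_k|=d_k$, $A_k\subseteq\{1,\dots,y_k\}$, and $A_k$ non-initial one gets $y_k\ge d_k+1$; and $A_1\subseteq\dots\subseteq A_r$ forces $y_1\le\dots\le y_r$, so the $y_k$ cut $\{1,\dots,r\}$ into blocks of constant value. On a block $k\in\{\ell,\dots,K\}$ with common value $z$: since $A_K\subseteq\{1,\dots,z\}$ but $A_K\ne\{1,\dots,z\}$ (otherwise $A_K$ is an initial segment) we have $d_K\le z-1$, while $d_K\ge d_\ell+(K-\ell)$ because the $d_k$ strictly increase, hence $z\ge d_\ell+(K-\ell)+1$. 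Now count the inversions of $\mu$ ending at position $z$: with $c:=\mu(z)\le\ell$ (as $z\in A_\ell$) these are the pairs $(i,z)$, $i<z$, with $\mu(i)>c$, that is the elements of $\{1,\dots,z-1\}\setminus A_c$; and since $A_c\subseteq A_\ell\subseteq\{1,\dots,z\}$ with $z\in A_c$, there are exactly $(z-1)-(|A_c|-1)=z-d_c\ge z-d_\ell\ge K-\ell+1$ of them, i.e.\ at least the size of the block. The positions $z$ attached to distinct blocks are distinct, so summing over all blocks yields at least $\sum(\text{block size})=r$ inversions, which is the claim. The hard part, as signalled, is exactly this last step: naive level-by-level counting over-counts, and the remedy is to count inversions ending at the top position of each $y_k$-constant block.
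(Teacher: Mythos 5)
Your proof is correct. It rests on the same basic mechanism as the paper's: every level $F_k$ that is moved by $w$ contributes vanishing entries $g_{i,j}=0$ with $j\in S_k:=w(\{1,\dots,d_k\})$, $i\notin S_k$, $i<j$, and the whole difficulty is to organize these so that the contributions of different levels are not counted twice. Where you differ is in the bookkeeping. The paper picks, for each $k$, an arbitrary witness $j_k\le d_k$ with $w(j_k)>d_k$, bounds the number of zeros in column $w(j_k)$ by $w(j_k)-d_k$, and then extracts a greedy chain of levels $k(1)<k(2)<\dots$ (each chosen as the smallest index whose dimension absorbs the previous witness) so that a telescoping sum gives the total bound $\ge r$, distinctness of columns being guaranteed by $w(j_{k(1)})<\dots<w(j_{k(q)})$. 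You instead recast the count as inversions of the level function $\mu$, take the canonical witness $y_k=\max S_k$, partition $\{1,\dots,r\}$ into maximal blocks on which $y_k$ is constant, and show each block of size $K-\ell+1$ contributes at least that many inversions ending at its top column $z$, the blocks having distinct $z$. The two decompositions are genuinely different (greedy chain with telescoping versus blocks of constant $\max S_k$), though they serve the same purpose; your version has the minor advantages of a canonical choice of witness, of working directly with the coordinate subspaces $S_k$ (so no conjugation step translating $\Stab(F_*)$ into $\Stab(wF_*)$ is needed), and of isolating the purely combinatorial statement about inversions of $\mu$, while the paper's chain argument is a bit shorter once the witnesses $j_k$ and sets $I_k$ are in place.
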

\begin{proof}
Let $|V|$ be the dimension of the vector space $V$. Since $w$ does not preserve $F_k$, there is $j_k\leq|F_k|$ with $w(j_k)>|F_k|$. 
Let $I_k:=\{i>|F_k| \mbox{ s.t. } w(i)<w(j_k)\}$. Then,
\begin{itemize}
\item
the number of elements in $I_k$ is $\#I_k\geq w(j_k)-|F_k|$,\item
if $i\in I_k$ and $h\in\Stab(F_*)$ then $h_{i,j_k}=0$, so
\item
if $i\in I_k$ and $g\in\Stab(wF_*)$,
then $g_{w(i),w(j_k)}=0$.  
\end{itemize}
So for any $i\in I_k$,  $(w(i),w(j_k))$ is an above diagonal matrix entry that is identically zero on $G$. 
Also, if there is a smallest index $l$ such that $w(j_k)\leq|F_l|$, then $w(j_k)-|F_k|\geq l-k$. If there is no such index then $w(j_k)>|F_r|$ and consequently $w(j_k)-|F_k|>r-k$. 

So, let $k(1)=1$, let $k(t+1)$ be the smallest index such that $w(j_{k(t)})\leq|F_{k(t+1)}|$ (when such an index exists) and let $k(q)$ be the last index. Then
\begin{equation}
\begin{array}{ccccccccc}
r-1&=&(r-k(q))&+&(k(q)-k(q-1))&+&\dots&+&(k(2)-k(1))\\
&<&\#I_{k(q)}&+&\#I_{k(q-1)}&+&\dots&+&\#I_{k(1)}.
\end{array}
\end{equation}
Since $w(j_{k(q)})>w(j_{k(q-1)})>\dots>w(j_{k(1)})$ are all distinct, we conclude that there are at least $r$ above diagonal matrix entries that are identically zero on $\Stab(wF_*)$. 
\end{proof}
\begin{corollary}\footnote{I.e., in the space of flags in $\mathbb R^m$, the $\Stab(E_*)$-orbit of $F_*$ has dimension $\geq$ length$(F_*)$.}
\label{orbit}
If two flags $E_*$ and $F_*$ are disjoint\footnote{This means $E_i\not=F_j$ for any $i,j$.} then
\begin{equation}
\label{orbdim}
\dim(\Stab(E_*)/(\Stab(E_*)\cap\Stab(F_*)))\geq\mbox{ length}(F_*).
\end{equation}  
\end{corollary}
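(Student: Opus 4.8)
The plan is to reduce the Corollary to Lemma \ref{upper} by first replacing $E_*$ with a complete flag refining it. Concretely, I would enlarge $E_*$ to a complete flag $\hat E_*$ of subspaces of $\mathbb Q^m$ that refines $E_*$ and is still disjoint from $F_*$. This is always possible: $\hat E_*$ is assembled by repeatedly inserting a subspace of a prescribed intermediate dimension into one of the quotients $E_{i+1}/E_i$, and at each such step the admissible subspaces form a positive--dimensional (hence infinite) Grassmannian over $\mathbb Q$, while the finitely many terms of $F_*$ exclude only finitely many of them. Since $\hat E_*$ refines $E_*$ we have $\Stab(\hat E_*)\subseteq\Stab(E_*)$. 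The quantity in (\ref{orbdim}) is unchanged when $E_*$ and $F_*$ are conjugated simultaneously by an element of $\GL(\mathbb R^m)$, and $\hat E_*$ and $F_*$ lie in a common apartment; so after such a conjugation I may assume $\hat E_*$ is the standard complete flag (\ref{flag}) and $F_*$ is a flag of coordinate subspaces. Then $E_*$, being a subflag of $\hat E_*$, is itself a standard flag, and $F_*=wF'_*$ for a standard flag $F'_*$ (of the same length $r:=\mathrm{length}(F_*)$) and a permutation $w\in W$.

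Next I would verify the hypothesis of Lemma \ref{upper}. For each $k$ the subspace $F'_k$ is the unique term of $\hat E_*$ of dimension $\dim F_k$, while $wF'_k=F_k$ is not a term of $\hat E_*$ (because $\hat E_*$ and $F_*$ are disjoint); hence $wF'_k\neq F'_k$ for every $k$. Lemma \ref{upper} then produces at least $r$ pairs $i<j$ for which the matrix entry $g_{i,j}$ vanishes identically on $\Stab(wF'_*)=\Stab(F_*)$.

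Finally I would conclude with a dimension count. For a flag of coordinate subspaces, the stabilizer in $\GL(\mathbb R^m)$ is the set of invertible matrices obtained by requiring some fixed set of entries to vanish and leaving the rest free, so its dimension is the number of free entries; the same holds for $\Stab(E_*)\cap\Stab(F_*)$, whose free entries are exactly those free for both $E_*$ and $F_*$. Hence
\[
\dim\bigl(\Stab(E_*)/(\Stab(E_*)\cap\Stab(F_*))\bigr)=\dim\Stab(E_*)-\dim(\Stab(E_*)\cap\Stab(F_*))
\]
equals the number of matrix entries that are free on $\Stab(E_*)$ but not on $\Stab(F_*)$. Since $E_*$ is a standard flag, $\Stab(E_*)$ contains every upper triangular matrix, so every entry strictly above the diagonal is free on $\Stab(E_*)$; therefore the number above is at least the number of strictly--above--diagonal entries vanishing on $\Stab(F_*)$, which is $\geq r=\mathrm{length}(F_*)$ by the previous step. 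This is exactly (\ref{orbdim}).

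I expect the only genuine obstacle to be the first step. One cannot feed $E_*$ and $F_*$ into Lemma \ref{upper} directly: $E_*$ need not be complete, and $F_*$ can coincide with a term of a completed standard flag even while being disjoint from $E_*$ itself, in which case the hypothesis $wF'_k\neq F'_k$ fails. Enlarging $E_*$ to a complete flag that remains disjoint from $F_*$ repairs this, and then the inclusion $\Stab(\hat E_*)\subseteq\Stab(E_*)$ (equivalently, the elementary inequality $\dim(V+W)-\dim W\geq\dim(U+W)-\dim W$ for subspaces $U\subseteq V$, applied to the Lie algebras) lets the estimate coming from Lemma \ref{upper} for $\hat E_*$ pass to $E_*$; the rest is bookkeeping with coordinate subspaces of $m\times m$ matrices.
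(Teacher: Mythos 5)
Your proof is correct, and its core is the same as the paper's: reduce to Lemma \ref{upper} and then count above-diagonal matrix entries that vanish identically on $\Stab(F_*)$ but are free coordinates on the block upper triangular group $\Stab(E_*)$. Where you genuinely diverge is in how the degenerate case is eliminated, namely when some $F_k$ is standard for the chosen apartment, so that $wF'_k=F'_k$ and Lemma \ref{upper} does not apply to that term. The paper keeps $E_*$ as given and trades vertices: any standard $F_k$ is moved from $F_*$ to $E_*$, which lowers the right-hand side of (\ref{orbdim}) by one and the left-hand side by at least one (because $\Stab(E_*)\cap\Stab(F_k)$ is a proper, hence lower-dimensional, subgroup of the connected group $\Stab(E_*)$), so the inequality in the traded configuration implies the original one. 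You instead choose the apartment more carefully: you complete $E_*$ to a full flag $\hat E_*$ still disjoint from $F_*$ (legitimate, since each Grassmannian of intermediate subspaces over $\mathbb Q$ is infinite and only the finitely many $F_j$ must be avoided) and put $\hat E_*$ and $F_*$ in a common apartment, after which no term of $F_*$ is standard and Lemma \ref{upper} applies to all of $F_*$ at once. Your route buys a one-shot application of the lemma at the price of the completion/genericity step; the paper's vertex trading avoids completing the flag but requires the small check that the left-hand side really does drop. One remark: your closing comment that the estimate for $\Stab(\hat E_*)$ passes to $\Stab(E_*)$ via $\Stab(\hat E_*)\subseteq\Stab(E_*)$ is true (it is the statement that the $\Stab(E_*)$-orbit of $F_*$ contains the $\Stab(\hat E_*)$-orbit), but it is not needed, since your codimension count is already carried out directly inside $\Stab(E_*)$.
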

\begin{proof}
We may assume that $E_*$ is a standard flag, and $F_*$ is a translate of a standard flag by some element $w\in W$. Moreover, if an element $F_k$ in $F_*$ is standard, then we can move it from $F_*$ to $E_*$. This decreases the right hand side of inequality $(\ref{orbdim})$ by one and the left hand side by at least one. So, we may assume none of the elements of $F_*$ are standard, i.e. that $wF_k\not=F_k$ for all $k$. By Lemma \ref{upper}, there are at least $r=$ length$(F_*)$ matrix entries above the diagonal that are identically zero on $\Stab(F_*)$. On the other other hand, $\Stab(E_*)$ is a group of block upper triangular matrices all of whose above diagonal entries are non-zero. Consequently, $\Stab(E_*)\cap\Stab(F_*)$ is a submanifold in $\Stab(E_*)$ of codimension $\geq r$.  
\end{proof}
\subsection*{Splitting the stabilizer}
Write a flag $E_*$ as 
$$
0\subsetneq E_1\subsetneq\dots\subsetneq E_r\subsetneq\mathbb Q^m.
$$
The number of distinct subspaces $E_i$ is the length of $E_*$, denoted $length(E_*)$. (Above, $length(E_*)=r$).
We will also use the conventions $E_0=0$ and $E_{r+1}=\mathbb Q^{m}$. The associated graded vector space of the flag $E_*$ is 
$$
Gr(E_*):=\bigoplus_{i=0}^{r}E_{i+1}/E_i=E_1\oplus (E_2/E_1)\oplus\dots\oplus (E_{r}/E_{r-1})\oplus(\mathbb Q^m/E_{r}).
$$
The stabilizer $\Stab(E_*)$ acts on this graded vector space $Gr(E_*)$ and the subgroup $N$ of elements that act trivially is nilpotent. Associated to this action is a splitting of $\Stab(E_*)$ as a semidirect product 
\begin{equation}
\label{stsplitting}
\Stab(E_*)=N\rtimes\left(\prod_{i=0}^r \GL(E_{i+1}/E_i)\right).
\end{equation}


\subsection*{Intersections of flags} Now, denote by $E_*$ and $F_*$ the flags corresponing to simplices $\sigma$ and $\tau$, respectively.\footnote{Then $length(E_*)=|\sigma|+1$ and $length(F_*)=|\tau|+1$.} 
The flag $F_*$ defines a flag $F_{i*}:=(E_{i+1}\cap F_*)/(E_i\cap F_*)$ in each of the vector spaces $E_{i+1}/E_i$. 
Since $\sigma$ and $\tau$ are disjoint, the vector space $F_j$ is not part of the flag $E_*$, so it is not preserved by the stabilizer $\Stab(E_*)$. Consequently, one (or both) of the following must be true (otherwise it would follow from (\ref{stsplitting}) that every element of $\Stab(E_*)$ preserves $F_j$): 
\begin{enumerate}
\item
\label{nontrivial}
$F_{i,j}=(E_{i+1}\cap F_j)/(E_i\cap F_j)$ a non-trivial proper subspace of $E_{i+1}/E_i$ for some $i$, or
\item
the nilpotent group $N$ does not preserve $F_j$, i.e. $NF_j\not=F_j$.
\end{enumerate}
Let $F^0_*$ be the subflag of $F_*$ consisting of those spaces $F_j$ that do not satisfy (\ref{nontrivial}). By Corollary \ref{orbit}, the $\Stab(E_*)$-orbit of $F^0_*$ has dimension $\geq$ length$(F_*^0)$. Since the $M_i$ fix $F_*^0$, equation (\ref{stsplitting}) implies the $N$-orbit of $(F^0_*)$ also has dimension $\geq$ length $(F_*^0)$. So, 
\begin{equation}
\dim(N/(\Stab(F_*)\cap N))\geq\mbox{length}(F^0_*).
\end{equation}
Let $L(i):=$length$(F_{i*})$, pick indices $k_1<\dots<k_{L(i)}$ so that $F_{i*}$ is 
$$
0\subsetneq F_{i,k_{1}}\subsetneq\dots\subsetneq F_{i,k_{L(i)}}\subsetneq{E_{i+1}\over E_i},
$$ 
and let $V_{ij}:={F_{i,k_{j+1}}\over F_{i,k_j}}$, so that the associated graded of $F_{i*}$ is
$$
Gr(F_{i*})= V_{i0}\oplus V_{i1}\oplus\dots V_{iL(i)}.
$$ 
Now, the stabilizer of $F_{i*}$ splits as 
$$
\Stab(F_{i*})=N_i\rtimes\left(\prod_{j=0}^{length(F_{i*})} \GL(V_{ij})\right),
$$
where $N_i$ is the nilpotent group that acts trivially on the associated graded $Gr(F_{i*})$. 
Let $M^1_{ij}<\GL(V_{ij})$ be the subgroup of matrices of determinant $\pm 1$ and denote
$$
G:=(\Stab(F_*)\cap N)\rtimes\prod^{length(E_*)}_{i=0}\left(N_i\rtimes\left(\prod_{j=0}^{length(F_{i*})}M^1_{ij}\right)\right).
$$
This is a Lie subgroup of $\GL(\mathbb R^m)\cong N\rtimes(\prod_{i}(N_i\rtimes(\prod_j\GL(V_{ij}))))$. For every non-zero vector space $V_{ij}$, each $M^1_{ij}$ has codimension one in $\GL(V_{ij})$ and also $M^1_{ij}\cap \SO(m)$ has codimension one in $\GL(V_{ij})\cap \SO(m)$. The group $K=\prod_{ij}M^1_{ij}\cap\SO(m)=G\cap\SO(m)$ is a maximal compact subgroup of $G$ So, the codimension of $G/K$ in $\GL(\mathbb R^m)/\SO(m)$ is 
\begin{eqnarray*}
\label{codimension}
\dim(N/(\Stab(F_*)\cap N))&+&\sum_{i=0}^{length(E_*)}(1+length(F_{i*}))=\\
\dim(N/(\Stab(F_*)\cap N))&+&\left(\sum_{i=0}^{length(E_*)}length(F_{i*})\right)+length(E_*)+1\\
\geq length(F^0_*)&+&\left(\sum_{i=0}^{length(E_*)}length(F_{i*})\right)+length(E_*)+1\\
=length(F_*)&+&length(E_*)+1\\
=(|\sigma|+1)&+&(|\tau|+1)+1.
\end{eqnarray*} 
We can rewrite this conclusion as
\begin{eqnarray}
\label{slmbound}
\dim(G/K)+|\sigma|+|\tau|&\leq&\dim(\GL(\mathbb R^m)/\SO(m))-3\\
&=&\dim(\SL(\mathbb R^m)/\SO(m))-2\\
&<&\dim\partial,
\end{eqnarray}
where $\partial$ is the Borel-Serre boundary. 

Next we will prove the following claim. \subsection*{Claim} For any torsionfree subgroup $\Gamma<\SL(\mathbb Z^m)$ we have 
\begin{equation}
\label{hdim}
\mbox{hdim}(\Stab_{\Gamma}(E_*)\cap\Stab_{\Gamma}(F_*))\leq\dim(G/K).
\end{equation}
Together with (\ref{slmbound}) this claim implies that the complex of flags is a small singular model for the $\Gamma$-cover $\widetilde\partial\ra\partial$ of the Borel-Serre boundary. 
\subsection*{Proof of claim}
We will need the following simple lemma. 
\begin{lemma}
Suppose $\gamma\in \GL(\mathbb Q^m)$ is an element whose characteristic polynomial $p_{\gamma}(t):=\det(t-\gamma)$ has integer coefficients and $\det(\gamma)=\pm 1$. If $V\subset \mathbb Q^m$ is a $\gamma$-invariant subspace 
then $\det(\gamma\mid_V)=\pm 1$ and $\det(\gamma\mid_{(\mathbb Q^m/V)})=\pm 1$.  
\end{lemma} 
\begin{proof}
First, since $p_{\gamma}(t)$ is a monic polynomial with integer coefficients, any root $\lambda$ of $p_{\gamma}(t)$ is an algebraic integer. Second, $p_{\gamma}(0)=\pm\det(\gamma)=\pm 1$. Therefore $p_{\gamma}(t)=t\cdot q(t)\pm 1$ for some monic polynomial with integer coefficients $q(t)$. The inverse $\lambda^{-1}$ can be expressed as $\lambda^{-1}=\pm q(\lambda)$, so it is also an algebraic integer. 

Note that $\gamma$ acts on the associated graded vector space $V\oplus(\mathbb Q^{m}/V)\cong\mathbb Q^{m}$ with the same characteristic polynomial, so $p_{\gamma}$ factors as 
\begin{equation}
\label{factor}
p_{\gamma}(t)=\det(t-\gamma\mid_{V})\det(t-\gamma\mid_{(\mathbb Q^m/V)}).
\end{equation} 
Let $\lambda_1,\dots,\lambda_{\dim(V)}$ be the roots of the polynomial $\det(t-\gamma\mid_V)$. Since the $\lambda_i$ are also roots of $p_{\gamma}(t)$ both 
$
\det(\gamma\mid_V)=\pm\lambda_1\cdot\dots\cdot\lambda_{\dim(V)}
$
and its inverse $\det(\gamma\mid_V)^{-1}$ are algebraic integers. Since $\det(\gamma\mid_V)\in\mathbb Q$ is a rational number this implies both $\det(\gamma\mid_V)$ and $\det(\gamma\mid_V)^{-1}$ are integers, which can only happen if $\det(\gamma\mid_V)=\pm 1$. By (\ref{factor}) we also get $\det(\gamma\mid_{(\mathbb Q^m/V)})=\pm 1$.  
\end{proof}
Applying this lemma repeatedly, we conclude that the group 
$$
S:=\Stab_{\Gamma}(E_*)\cap\Stab_{\Gamma}(F_*)
$$ 
acts on $V_{ij}$ by matrices of determinant $\pm 1$. Therefore $S<G$.
Since $S$ is a torsionfree, discrete subgroup of $G$, it acts by covering translations on the quotient $G/K$. Since $G/K$ is contractible\footnote{It is built from symmetric manifolds by taking products and fibre bundles with simply connected nilpotent fibres.} we conclude that hdim$(S)\leq\dim(G/K).$ This finishes the proof of the claim and thus also finishes the proof of Proposition \ref{flagcomplexsmall}.


\section{Simply connected fillings (Finishing the proof of Theorem \ref{incompressible})}
\begin{proposition}
\label{simplyconnected}
Suppose $\widetilde\partial\sim\vee S^{q-1}$. If $\partial$ has a compressible, simply connected filling then $H_*(\partial)$ is torsionfree and $H_k(\partial)=0$ for $k\notin\{0,q-1,n-q,n-1\}$.\footnote{Here the dimension of $\partial$ is $n-1$.} 
\end{proposition}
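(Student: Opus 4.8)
The plan is to analyze the Serre spectral sequence of the fibration $\widetilde\partial \to \widetilde\partial \times_\Gamma \widetilde M \to M$ (with $\Gamma = \pi_1 M$), and to play the existence of a homotopy section $c\colon M \to \partial$ of the inclusion against the hypothesis that $M$ is simply connected. Since $M$ is simply connected, $\Gamma = \pi_1 M$ is trivial, so the fibration is just $\widetilde\partial = \partial$ and $M = \widetilde M$, and we are really studying the Serre spectral sequence of $\partial \hookrightarrow \partial \times M \to M$, i.e. we should think of the mapping-cylinder pair $(M,\partial)$ directly. Because $\partial \sim \vee S^{q-1}$, its reduced homology $D := \overline H_{q-1}(\partial)$ is free and concentrated in degree $q-1$. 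First I would run the spectral sequence $H_p(M; H_r(\partial)) \Rightarrow H_{p+r}(E)$ where $E = \partial \times M$ when $M$ is simply connected — but more to the point, I would use the homotopy section to deduce that in the long exact sequence of the pair $(M,\partial)$, the map $H_*(\partial) \to H_*(M)$ is split injective, so $H_*(M) \cong H_*(\partial) \oplus H_*(M,\partial)$.

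Next, I would bring in the universal coefficient / Poincaré–Lefschetz duality for the manifold-with-boundary $(M,\partial)$: $H_k(M,\partial) \cong H^{n-k}(M)$ and $H_k(M) \cong H^{n-k}(M,\partial)$, where $n = \dim M$. Combining this with the splitting $H_*(M) = H_*(\partial) \oplus H_*(M,\partial)$ and the fact that $\partial \sim \vee S^{q-1}$ (so $H_*(\partial)$ is free, nonzero only in degrees $0$ and $q-1$), the long exact sequence of the pair collapses into short pieces that force $H_*(M,\partial)$, hence $H_*(M)$, to be concentrated in a very restricted set of degrees. Concretely: from $H_k(\partial) \hookrightarrow H_k(M) \to H_k(M,\partial) \to H_{k-1}(\partial) \to \cdots$ being split, one gets $H_k(M,\partial) \cong H_k(M)/H_k(\partial)$ has a description forcing it to vanish except near $k \in \{0, q-1, n-q, n-1\}$, and then feeding this back through duality controls $H_k(M)$ and finally $H_k(\partial)$ itself, giving both the vanishing statement $H_k(\partial) = 0$ for $k \notin \{0,q-1,n-q,n-1\}$ and, because everything in sight is free (spheres, plus duality over $\mathbb Z$ with the torsion linking carefully tracked), the torsion-freeness of $H_*(\partial)$.

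The main obstacle I expect is the bookkeeping with torsion: Poincaré–Lefschetz duality over $\mathbb Z$ mixes homology and cohomology, and cohomology introduces $\mathrm{Ext}$ terms, so to conclude $H_*(\partial)$ is \emph{torsionfree} I will need to argue that no torsion can appear — presumably by noting that $H_*(M,\partial)$ and $H_*(M)$ feed into each other through duality in a way that would propagate any torsion class into an impossible degree, or else would contradict the freeness of $H_{q-1}(\partial) = D$ and $H_0(\partial) = \mathbb Z$. I would handle this by working one degree at a time from the bottom up, using that $H_0(M) = \mathbb Z$, $H_1(M) = 0$ (simply connected), and the section splitting, and checking that the first potential torsion or first nonzero homology group outside the allowed range would produce, via duality, a nonzero group contradicting $\partial \sim \vee S^{q-1}$. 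The degrees $\{0, q-1, n-q, n-1\}$ are exactly the ones surviving this squeeze: $0$ and $n-1$ from $\partial$ itself being a closed $(n-1)$-manifold (or from $H_0$), $q-1$ from the wedge of spheres, and $n-q$ as its Poincaré–Lefschetz dual inside $M$.
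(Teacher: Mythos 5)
Your proposal goes wrong at the very first step, where you set $\Gamma=\pi_1M=1$, conclude $\widetilde\partial=\partial$, and then treat $\partial$ itself as a wedge of $(q-1)$-spheres. The cover $\widetilde\partial$ in the hypothesis is not the cover induced by the new filling $M$; it is a connected cover of $\partial$ given in advance (in the applications, the cover coming from the Borel--Serre or $(\mathbb T_1^2)^d$ structure) which happens to be homotopy equivalent to $\vee S^{q-1}$. The boundary $\partial$ is a closed $(n-1)$-manifold and is not $\vee S^{q-1}$; indeed the conclusion of the proposition explicitly allows $H_{n-q}(\partial)$ and $H_{n-1}(\partial)$ to be nonzero, and the whole point of the statement is that extra homology of $\partial$ (e.g.\ $H_1(\partial)\not=0$, or odd-dimensional classes forced by $\chi(\partial)=0$) rules out a simply connected compression. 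By assuming $\overline H_*(\partial)$ is free and concentrated in degree $q-1$ you have assumed away the content of the proposition. Relatedly, your splitting goes in the wrong direction: a compression is a map $c:M\ra\partial$ with $i\circ c\simeq\id_M$, so $i_*:H_*(\partial)\ra H_*(M)$ is split \emph{surjective}, and the long exact sequence gives $H_*(\partial)\cong H_*(M)\oplus H_{*+1}(M,\partial)$, not $H_*(M)\cong H_*(\partial)\oplus H_*(M,\partial)$.

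The idea you are missing is the actual use of simple connectivity: since $\pi_1M=1$, the compression $c:M\ra\partial$ lifts through the covering $\widetilde\partial\ra\partial$ to a map $\widetilde c:M\ra\widetilde\partial$, and the composite $M\stackrel{\widetilde c}\ra\widetilde\partial\ra\partial\hookrightarrow M$ is homotopic to the identity. Hence $M$ is dominated by $\vee S^{q-1}$, so $\overline H_*(M)$ is torsionfree and concentrated in degree $q-1$. Feeding this into the correct splitting together with Lefschetz duality $H_{*+1}(M,\partial)\cong H^{n-1-*}(M)$ yields $H_*(\partial)\cong H_*(M)\oplus H^{n-1-*}(M)$, which is torsionfree and concentrated in degrees $\{0,q-1,n-q,n-1\}$; this is exactly the paper's argument. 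Without the lifting step you have no control on $H_*(M)$ at all, so the degree-by-degree ``squeeze'' you describe cannot get started, and the torsion bookkeeping you flag as the main obstacle never becomes an issue once $\overline H_*(M)$ is known to be free and concentrated in one degree.
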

\begin{proof}
Suppose $c:M\ra\partial$ is a compression of a simply connected, compressible filling $(M,\partial)$.  
\begin{itemize}
\item
Since $M$ is simply connected, the compression $c$ lifts to a homotopy section $\widetilde c:M\ra\widetilde\partial$ of $\widetilde\partial\ra M$. So, since $\widetilde\partial$ is homotopy equivalent to a wedge of $(q-1)$-spheres, $M$ is also homotopy equivalent to a (possibly smaller) wedge of $(q-1)$-spheres. In particular, the reduced homology $\overline H_*(M)$ is torsionfree and concentrated in dimension $q-1$.
\item
Using the compression $c:M\ra\partial$ and Poincare duality we get
$$
H_*(\partial)\cong H_*(M)\oplus H_{*+1}(M,\partial)=H_*(M)\oplus H^{n-1-*}(M).
$$
So, since $\overline H_*(M)$ is torsionfree and concentrated in dimension $q-1$, $H_*(\partial)$ is also torsionfree and concentrated in dimensions $\{0,q-1,n-q,n-1\}$. 
\end{itemize}
\end{proof}
\begin{example}
$H_1(BF_2^2)\not=0$ so we conclude from\footnote{For $d>2$, the proposition applies directly with $q=d$. For $d=2$ the boundary $\partial$ is aspherical so we can apply the proposition with $q=1$.} Proposition \ref{simplyconnected} that for $d\geq 2$ every simply connected filling of $\partial((\mathbb T^2_1)^d)$ is incompressible.  
\end{example}
\begin{example}
For any $m\geq 4$ there is a finite index torsionfree subgroup $\Gamma<\SL(\mathbb Z^m)$ whose abelianization $H_1(B\Gamma)$ is nontrivial\footnote{It follows from the normal subgroup theorem (and can also be proved directly) that $H_1(B\Gamma)$ is finite.}. Since the corresponding Borel-Serre boundary $\partial$ has fundamental group $\Gamma$, $H_1(\partial)=H_1(B\Gamma)$ and we conclude from Proposition \ref{simplyconnected} that every simply connected filling of $\partial$ is incompressible. 
\end{example}
\begin{example}
The Borel-Serre boundaries $\partial$ corresponding to finite index torsion free subgroups $\Gamma<\SL(\mathbb Z^m)$ have vanishing Euler characteristic $\chi(\partial)=0$. If $m=4k$ then both $q-1=m-2$ and $d={m\choose 2}$ are even, and $\partial$ is a manifold of dimension $d+q-1$. So, $\chi(\partial)=0$ implies that $\partial$ has some odd dimensional homology. Since $q-1,d,$ and $d+q-1$ are even, Proposition \ref{simplyconnected} implies that every simply connected filling of $\partial$ is incompressible. 
\end{example}
\subsection{\label{finish}Finishing the proof of Theorem \ref{incompressible}}
Suppose $m\geq4$, $\Gamma<\SL(\mathbb Z^m)$ is a finite index torsionfree subgroup and $(M,\partial)$ is a compressible filling of the Borel-Serre boundary $\partial$. Then the  fundamental group of the boundary $\pi_1\partial=\Gamma$ splits as a semidirect product $\Gamma\cong K\rtimes\pi_1M$. 
Theorem \ref{nonperipheral} and Proposition \ref{flagcomplexsmall} imply that the filling is not $\pi_1$-injective, so $K\not=0$. The Margulis normal subgroup theorem implies that $K$ is a finite index subgroup of $\Gamma$, so $\pi_1M$ is finite. Since $\Gamma$ is torsionfree, $\pi_1M$ must be trivial. Thus, any compressible filling of $\partial$ is simply connected. The above example shows if $m=4k$ there are no compressible, simply connected fillings of $\partial$, so we conclude that $\partial$ has no compressible fillings at all. This proves Theorem \ref{incompressible}.

\section*{Notation} For the rest of the paper, we assume $(\widetilde M,\widetilde\partial)\sim(*,\vee S^{q-1})$. Let $\Gamma=\pi_1M$ be the fundamental group and let $D=\overline H_{q-1}(\vee S^{q-1})$ be the reduced homology $\Gamma$-module. We also assume that $\Gamma$ has trivial center. 

\section{\label{smith}No homotopically trivial $\mathbb Z/p$-actions (proof of Theorem \ref{htrivial})}

In this section we use $\mathbb F_p$-coefficients. 
The proof is via the method in \cite{avramidi2011periodic}. We refer the reader to that paper for more details.

The bundle $\widetilde\partial\ra\widetilde\partial\times_{\Gamma}\widetilde\partial\ra\partial$  has a diagonal section $\partial\stackrel{s}\ra\widetilde{\partial}\times_{\Gamma}\widetilde{\partial}$ so its homology splits 
as 
$H_*(\widetilde{\partial}\times_{\Gamma}\widetilde{\partial})\cong H_*(\partial)\oplus H_{*-(q-1)}(\partial;D)$. Lemma \ref{equivariant} implies the diagonal section $s$ is a homology isomorphism in dimensions $\geq n-1$ and consequently 
\begin{equation}
\label{vanishequation}
H_{\geq n-q}(\partial;D)=H_{\geq (n-1)-(q-1)}(\partial;D)=0.
\end{equation}

It follows from the spectral sequence corresponding to $\widetilde\partial\ra\widetilde\partial\times_{\Gamma}\widetilde M\ra M$ that 
\begin{eqnarray}
\label{fund}
H_{n-q}(M;D)&\cong&\mathbb F_p,\\
\label{zero}
H_{>n-q}(M;D)&=&0.
\end{eqnarray}
Note also that the long exact sequence, (\ref{vanishequation}) and (\ref{zero}) imply 
\begin{equation}
\label{relativevanish}
H_{>n-q}(M,\partial;D)=0.
\end{equation}


Set $d=n-q$.  
Let $H^{cl}$ be homology with closed supports and $H^e$ the homology of the end (see \cite{avramidi2011periodic} for more on these notions). These are generalizations of relative homology and homology of the boundary, respectively, and defined in such a way that
\begin{eqnarray}
\label{closed}
H_*^{cl}(\dot{M};D)&\cong&H_*(M;\partial;D),\\
\label{end}
H_*^e(\dot{M};D)&\cong&H_*(\partial;D).
\end{eqnarray}
But, they also make sense if we do not know that a manifold is the interior of a compact manifold with boundary. In particular, they make sense for the fixed point set $F$ of a homotopically trivial $\mathbb Z/p$-action on the interior $\dot{M}$. The assumption that $\Gamma$ has trivial center implies that any homotopically trivial $\mathbb Z/p$-action on $M$ lifts to a $\mathbb Z/p$-action on $\widetilde M$ that commutes with $\Gamma$. Consequently (Theorem 7 in \cite{avramidi2011periodic}), we get Smith theory isomorphisms
\begin{eqnarray}
H_*(F;D)&\cong& H_*(\dot M;D),\\
H_*^{cl}(F;D)&\cong&H^{cl}_{*+(n-\dim(F))}(\dot M;D).
\end{eqnarray}

Now, look at the commutative diagram of long exact sequences
\begin{equation}
\begin{array}{ccccc}
0&&\mathbb F_p&&\\
||&&||&&\\
H_d^e(\dot{M};D)&\ra& H_d(\dot{M};D)\\
\uparrow&&||&&\\
H_d^e(F;D)&\stackrel{\phi}\ra&H_d(F;D)&\ra&H_d^{cl}(F;D)\\
&&&&||\\
&&&&H^{cl}_{d+(n-\dim(F))}(\dot M;D),
\end{array}
\end{equation} 
where the vertical isomorphisms come from Smith theory, the top left term is zero by (\ref{vanishequation}) and (\ref{end}), while the top middle term is $\mathbb F_p$ by (\ref{fund}). If $\dim(F)<n$ then, by (\ref{relativevanish}) and (\ref{closed}) the bottom right term is zero, implying $\phi$ is onto, which is a contradiction. So, we must have $\dim(F)=n$ i.e. the fixed set is the entire manifold $\dot{M}$. 
\section{Isometries of complete Riemannian metrics (Proof of Corollary \ref{isometries})}
If $g$ is a complete, Riemannian metric on the interior $\dot{M}$ then $\Isom(\dot{M},g)$ is a Lie group and the group of homotopically trivial isometries $K:=\ker(\Isom(\dot{M},g)\ra\Out(\Gamma))$ is, as well. If $\widetilde\partial$ has a small singular model, then Theorem \ref{nonperipheral} (in the cases $q=0$ and $1$ we use the variant given in subsection \ref{variant}) implies that the interior $\dot{M}$ contains a compact subset $Z$ that cannot be moved off itself by any homotopically trivial isometry $\psi\in K$.\footnote{That is, $\psi(Z)\cap Z\not=\emptyset$.} It follows from this that the group $K$ of all such isometries is compact. Theorem \ref{htrivial} implies $K$ has no finite order elements, so $K$ is the trivial group. 

\section{\label{mcgcorollaries}Applications to moduli spaces}
\begin{corollary}[Minimal orbifold theorem]
The moduli space $\mathcal M_g^{\pm}$ is a minimal orbifold. 
\end{corollary}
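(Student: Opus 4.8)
The plan is to apply Theorem~\ref{htrivial} to the Harvey--Ivanov compactification $(M,\partial)$ of a finite-index torsion-free cover $\dot M$ of $\mathcal M_g^{\pm}$, together with Ivanov's description of the virtual automorphisms of the mapping class group. First I would recall that $\mathcal M_g^{\pm}$ being a minimal orbifold means it is not a nontrivial finite orbifold cover of a smaller orbifold; equivalently, there is no proper finite-index subgroup $H < \Gamma_g^{\pm} := \pi_0\mathrm{Homeo}(\Sigma_g)$ and no finite group $Q$ acting effectively on the corresponding cover $\dot M_H$ with quotient orbifold $\dot M_H / Q$ covered by $\mathcal M_g^{\pm}$ --- so it suffices to rule out effective finite group actions on finite covers of moduli space that are compatible with the orbifold structure. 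By a standard transfer/averaging argument it is enough to rule out $\mathbb Z/p$-actions, and by passing to a further finite cover one may assume the underlying manifold is $\dot M$ with $\pi_1 M = \Gamma$ a finite-index torsion-free subgroup of $\Gamma_g^{\pm}$.

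The key observation, due to Ivanov, is that any such action of a finite group on $\dot M$ realizing an orbifold cover acts on moduli space by isometries of the Teichm\"uller (or Weil--Petersson) structure, hence the induced outer action on $\Gamma = \pi_1 M$ is realized by an element of the extended mapping class group normalizing $\Gamma$; in particular, a $\mathbb Z/p$-action whose quotient is still covered by $\mathcal M_g^{\pm}$ must act trivially on $\pi_1 M$ up to inner automorphisms --- that is, it is homotopically trivial. So the task reduces to showing that $\dot M$ admits no nontrivial homotopically trivial $\mathbb Z/p$-action. Now I would verify the hypotheses of Theorem~\ref{htrivial}: the pair $(\widetilde M, \widetilde\partial)$ is homotopy equivalent to $(*, \vee S^{2g-2})$, so $q = 2g-1$; the curve complex is a small singular model for $\widetilde\partial$ by Proposition~\ref{curvecomplexsmall}; and $\Gamma$ is centerless because finite-index subgroups of mapping class groups of surfaces of genus $g \geq 2$ have trivial center. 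Theorem~\ref{htrivial} then forces any homotopically trivial $\mathbb Z/p$-action on $\dot M$ to be trivial, completing the argument.

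The main obstacle I anticipate is not the topological input --- Theorem~\ref{htrivial} does the heavy lifting once its hypotheses are checked --- but the bookkeeping that translates ``minimal orbifold'' into a statement purely about homotopically trivial $\mathbb Z/p$-actions on a genuine manifold cover. One must be careful that an orbifold cover $\mathcal M_g^{\pm} \to \mathcal O$ would, after passing to a common finite manifold cover, produce a finite group acting on $\dot M$, that this action can be taken to preserve the compactification and hence is realized on $(M,\partial)$, and --- the crucial point --- that Ivanov's theorem (every virtual automorphism of the mapping class group is geometric, i.e.\ induced by an element of $\Gamma_g^{\pm}$) forces the induced outer action to be inner, so that the action is homotopically trivial in the sense required. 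Once this reduction is in place, the conclusion is immediate.
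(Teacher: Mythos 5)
Your overall strategy (Theorem \ref{htrivial} plus Proposition \ref{curvecomplexsmall} plus Ivanov) is the right one, and your verification of the hypotheses of Theorem \ref{htrivial} (the pair $(\widetilde M,\widetilde\partial)\sim(*,\vee S^{2g-2})$, the curve complex as small singular model, $\pi_1\dot M$ centerless) matches the paper. But the way you invoke Ivanov contains a genuine error. You claim that a finite group action on $\dot M$ realizing an orbifold cover ``must act trivially on $\pi_1 M$ up to inner automorphisms, that is, it is homotopically trivial.'' This is false, and it is not what Ivanov's theorem says: Ivanov says automorphisms of finite index subgroups of the mapping class group are \emph{geometric} (induced by conjugation in the extended mapping class group), not that they are inner. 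Indeed the deck group of the honest cover $\dot M\ra\mathcal M_g^{\pm}$ already acts homotopically \emph{non}trivially, so a hypothetical larger deck group $G$ of $\dot M\ra Q$ certainly need not map trivially to $\Out(\pi_1\dot M)$; Theorem \ref{htrivial} alone can only kill the \emph{kernel} of $G\ra\Out(\pi_1\dot M)$, so your reduction leaves the homotopically nontrivial part of $G$ completely unconstrained. (Your intermediate assertion that such an action preserves the Teichm\"uller or Weil--Petersson structure is also unjustified: an orbifold cover $\mathcal M_g^{\pm}\ra Q$ carries no a priori metric compatibility, and avoiding such an assumption is precisely the point of this circle of results.)

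What is missing is a counting step, which is how the paper actually uses Ivanov. Given a finite-sheeted orbifold cover $\mathcal M_g^{\pm}\ra Q$, choose a manifold cover $\dot M\ra\mathcal M_g^{\pm}$ and a finite group $G$ of diffeomorphisms of $\dot M$ with $Q=\dot M/G$. Theorem \ref{htrivial} (with Proposition \ref{curvecomplexsmall}) shows $\dot M$ has no nontrivial homotopically trivial $\mathbb Z/p$-action, hence $G\ra\Out(\pi_1\dot M)$ is \emph{injective}. Ivanov's computation of the commensurator \cite{ivanov} then gives $|\Out(\pi_1\dot M)|=\deg(\dot M\ra\mathcal M_g^{\pm})$, so $|G|\leq\deg(\dot M\ra\mathcal M_g^{\pm})$; since $|G|=\deg(\dot M\ra Q)=\deg(\dot M\ra\mathcal M_g^{\pm})\cdot\deg(\mathcal M_g^{\pm}\ra Q)$, this forces $\deg(\mathcal M_g^{\pm}\ra Q)=1$, i.e.\ $Q=\mathcal M_g^{\pm}$. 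In short: Smith theory gives injectivity into $\Out$, Ivanov gives the upper bound on $\Out$, and the two together pin down the degree; your version tries to make Ivanov deliver homotopy triviality, which it does not.
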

\begin{proof}
If $\mathcal M_g^{\pm}\ra Q$ is a finite-sheeted orbifold cover then we can find a manifold $\dot{M}\ra\mathcal M_g^{\pm}$ and a finite
group of diffeomorphisms $G$ of $\dot{M}$ such that $Q=\dot{M}/G$. By Proposition \ref{curvecomplexsmall} and Theorem \ref{htrivial}, $\dot{M}$ has no homotopically trivial $\mathbb Z/p$-actions,
so the map $G\ra\Out(\pi_1\dot{M})$ is injective. Ivanov's computation of commensurators of mapping class groups \cite{ivanov}
shows that $|\Out(\pi_1\dot{M})|=\deg(\dot{M}\ra\mathcal M_g^{\pm})$ which implies $\mathcal M_g^{\pm}=Q$.  
\end{proof}
\begin{corollary}[Topological analogue of Royden's theorem]
If $h$ is a complete finite volume Riemannian (or Finsler) metric on moduli space $\mathcal M_g^{\pm}$, 
then the isometry group $I$ of the lifted metric $\widetilde h$ on the universal cover is the extended mapping class group.
\end{corollary}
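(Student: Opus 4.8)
The plan is to deduce this topological Royden theorem from the Corollary on isometries (Corollary \ref{isometries}) combined with Ivanov's computation of the abstract commensurator of the mapping class group. First I would pass from the metric $h$ on the orbifold $\mathcal M_g^{\pm}$ to a manifold model: choose a finite index torsion free subgroup $\Gamma<\pi_0\mathrm{Homeo}(\Sigma_g)$, let $\dot M\ra\mathcal M_g^{\pm}$ be the corresponding finite (orbifold) cover, which is a manifold, and pull $h$ back to a complete finite volume metric $g$ on $\dot M$. By Proposition \ref{curvecomplexsmall} the curve complex is a small singular model for $\widetilde\partial$, and the Harvey--Ivanov compactification $(M,\partial)$ realizes $\dot M$ as the interior of a compact manifold-with-boundary with $(\widetilde M,\widetilde\partial)\sim(*,\vee S^{2g-2})$; since $\Gamma$ is a finite index subgroup of a centerless group it is centerless, so $(M,\partial)$ satisfies the hypotheses of Theorem \ref{htrivial}.

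Next I would identify the isometry group $I$ of $\widetilde h$ with a group acting on $\dot M$. The universal cover of $\dot M$ is the Teichm\"uller space $\mathcal T_g$, and $\widetilde h$ descends from $\mathcal T_g$ to a complete finite volume metric $g$ on each $\dot M=\mathcal T_g/\Gamma$; an isometry of $\widetilde h$ normalizing $\Gamma$ induces an isometry of $(\dot M,g)$, and conversely every isometry of $(\dot M,g)$ lifts (since $\mathcal T_g$ is simply connected) to an isometry of $\widetilde h$ normalizing $\Gamma$. Thus $I=\bigcup_\Gamma N_I(\Gamma)$ and for each fixed $\Gamma$ one has an exact sequence $1\ra\Gamma\ra N_I(\Gamma)\ra\Isom(\dot M,g)\ra 1$ (using that $\Gamma$ is centerless, so it is exactly the kernel of the action on $\dot M$, and that it is normal in $N_I(\Gamma)$). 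Corollary \ref{isometries} applies: the homomorphism $\rho:\Isom(\dot M,g)\ra\Out(\pi_1 M)=\Out(\Gamma)$ is injective. Ivanov's theorem \cite{ivanov} identifies $\Out(\Gamma)$ (indeed the abstract commensurator of $\Gamma$) with the extended mapping class group; combining, the image of $N_I(\Gamma)$ in $\Out(\Gamma)$ is contained in the commensurator, which is the extended mapping class group, while conversely the extended mapping class group acts on $\mathcal T_g$ by isometries of the (mapping-class-group-invariant, hence descending) metric. Taking the union over all finite index torsion free $\Gamma$ then gives $I=\pi_0\mathrm{Homeo}(\Sigma_g)$, the extended mapping class group.

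The main obstacle I expect is the bookkeeping in the second step: one has to be careful that the metric $g$ one writes on $\dot M$ really is the descent of $\widetilde h$ and not something larger, and that the various lifting and descent arguments for isometries are compatible with passing between different finite index subgroups $\Gamma$. Concretely, one needs that an isometry of $\widetilde h$ which does not a priori normalize any particular $\Gamma$ still normalizes some finite index subgroup — this is where Ivanov's commensurator computation does the real work, since an element of $I$ conjugates $\Gamma$ to another lattice in $I$ and one must see these two lattices are commensurable inside the mapping class group. The rest (the Smith-theoretic input via Theorem \ref{htrivial}, the no-compression input via Theorem \ref{nonperipheral}, and the smallness of the curve complex) is already packaged in Corollary \ref{isometries} and Proposition \ref{curvecomplexsmall}, so the proof is essentially an assembly of these with \cite{ivanov}.

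\begin{proof}
Fix a finite index torsion free subgroup $\Gamma<\pi_0\mathrm{Homeo}(\Sigma_g)$ and let $\dot M=\mathcal T_g/\Gamma$ be the corresponding manifold cover of $\mathcal M_g^{\pm}$, with Harvey--Ivanov compactification $(M,\partial)$. Let $g$ be the complete finite volume Riemannian (or Finsler) metric on $\dot M$ obtained by pulling back $h$. Since $\mathcal T_g$ is simply connected, the metric $g$ lifts to a metric $\widetilde g$ on $\mathcal T_g$, and $\widetilde g=\widetilde h$ because both pull back to $g$ on $\dot M$. Any isometry $\psi\in I$ of $\widetilde h$ that normalizes $\Gamma$ descends to an isometry $\overline\psi$ of $(\dot M,g)$, and since $\Gamma$ is centerless it is precisely the kernel of the $N_I(\Gamma)$-action on $\dot M$, giving an exact sequence
\begin{equation}
1\ra\Gamma\ra N_I(\Gamma)\ra\Isom(\dot M,g)\ra 1.
\end{equation}
By Proposition \ref{curvecomplexsmall} the curve complex is a small singular model for $\widetilde\partial$, and $(\widetilde M,\widetilde\partial)\sim(*,\vee S^{2g-2})$ with $\pi_1 M=\Gamma$ centerless, so $(M,\partial)$ satisfies the assumptions of Theorem \ref{htrivial}. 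By Corollary \ref{isometries}, the map $\rho:\Isom(\dot M,g)\ra\Out(\Gamma)$ is injective. Hence $N_I(\Gamma)$ embeds into the group of automorphisms of $\Gamma$ up to inner ones, and Ivanov's computation \cite{ivanov} of the abstract commensurator identifies this with the extended mapping class group $\pi_0\mathrm{Homeo}(\Sigma_g)$. Conversely, the extended mapping class group acts on $\mathcal T_g$ preserving the mapping-class-group-invariant metric from which $\widetilde h$ descends, so it lies in $I$. Finally, any $\psi\in I$ conjugates $\Gamma$ to another finite index torsion free subgroup of $\pi_0\mathrm{Homeo}(\Sigma_g)$; by Ivanov's theorem these are commensurable inside the mapping class group, so $\psi$ normalizes a common finite index subgroup and the above applies. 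Taking the union over all such $\Gamma$ gives $I=\pi_0\mathrm{Homeo}(\Sigma_g)$.
\end{proof}
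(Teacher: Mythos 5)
There is a genuine gap, and it sits exactly where you flagged your ``main obstacle.'' Your argument only controls the part of $I$ that normalizes some finite index torsion free subgroup $\Gamma$ of the extended mapping class group: for such isometries the descent to $(\dot M,g)$, Corollary \ref{isometries}, and Ivanov's computation do give $N_I(\Gamma)/\Gamma\hookrightarrow\Out(\Gamma)$ and hence finiteness. But the claim that \emph{every} $\psi\in I$ ``conjugates $\Gamma$ to another finite index torsion free subgroup of $\pi_0\mathrm{Homeo}(\Sigma_g)$,'' and that Ivanov's theorem then makes the two commensurable, is unjustified. A priori $\psi\Gamma\psi^{-1}$ is just some discrete group of isometries of $\widetilde h$ on $\mathcal T_g$; nothing forces it to lie in the mapping class group, so Ivanov's theorem (which concerns isomorphisms between finite index subgroups \emph{of the mapping class group}, or its abstract commensurator) does not apply to it. Even granting that $I$ is a Lie group acting properly and that $\Gamma$ and $\psi\Gamma\psi^{-1}$ are both lattices in $I$, two lattices in the same Lie group need not be commensurable, so the ``common finite index subgroup'' you invoke does not exist by any result you have cited. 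In short, Corollary \ref{isometries} constrains isometries of the quotient $\dot M$, i.e.\ isometries of $\widetilde h$ already known to normalize $\Gamma$; it says nothing about the rest of $I$, and your bridge from ``some isometries'' to ``all isometries'' is missing. (A smaller inaccuracy: $\Out(\Gamma)$ is not the extended mapping class group but the finite group $N_{\mathrm{Mod}^{\pm}}(\Gamma)/\Gamma$; this is harmless but worth fixing.)

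The paper closes precisely this gap by quoting Theorem 1.2 of Farb--Weinberger \cite{farbweinbergerroyden}, which asserts that for any complete finite volume metric $h$ on $\mathcal M_g^{\pm}$ the isometry group $I$ of $\widetilde h$ contains the extended mapping class group as a finite index subgroup. Once that is known, $\mathcal M_g^{\pm}\ra\mathcal T_g/I$ is a finite orbifold cover, and the minimal orbifold theorem (itself proved from Proposition \ref{curvecomplexsmall}, Theorem \ref{htrivial} and \cite{ivanov}) forces the cover to be trivial, so $I$ is the extended mapping class group. If you want to keep your normalizer argument, you must either import the Farb--Weinberger finiteness statement at the start, or supply an independent proof that every isometry of $\widetilde h$ commensurates the mapping class group --- which is the hard analytic/geometric content and is not a consequence of the results in this paper.
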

\begin{proof}
Since $\widetilde h$ is lifted from moduli space, the extended mapping class group acts by isometries of $\widetilde h$. 
Farb and Weinberger show in Theorem 1.2 of \cite{farbweinbergerroyden} 
that the isometry group $I$ contains the extended mapping class group as a finite index subgroup so we get an orbifold cover $\mathcal M_g^{\pm}\ra\dot{\widetilde M}/I$.
By the minimal orbifold theorem this cover is trivial, i.e. the isometry group is the extended mapping class group.
\end{proof}
\begin{remark}
In \cite{avramidi2011isometries} these two corollaries were proved using $L^2$-betti numbers instead of small singular models. 
\end{remark}

\section{\label{obstruction} The obstruction $\alpha_M$ (Description of $\alpha_M$ and proof of Proposition \ref{obstructionproposition})}
There are no obstructions to building a $\Gamma$-map $\widetilde f:\widetilde M^{(q-1)}\ra\widetilde\partial$ that is the identity on the boundary. Such a map defines a $\Gamma$-equivariant cochain $o_f:C_{q}(\widetilde M)\ra\overline H_{q-1}(\widetilde\partial)=D$ by sending each cell $c$ to the homology class of $\widetilde f(\partial c)$. This cochain is zero on the boundary, and it is actually a cocycle, 
so it gives an element $[o_f]^*\in H^q(M,\partial;D)$ in the relative cohomology group. It is the obstruction to extending $\widetilde f$ to a $\Gamma$-map $\widetilde g:\widetilde M^{(q)}\ra\widetilde\partial$ that is still the identity on the boundary, after possibly changing $\widetilde f$ on the $(q-1)$-cells to another $\Gamma$-map $\widetilde f':\widetilde M^{(q-1)}\ra\widetilde\partial$. 
Such an extension $\widetilde g$ does not exist (it would give a homotopy section $g$ of $\partial^{(q)}\hookrightarrow M^{(q)}$, and would imply that $\overline H_{q-1}(\widetilde\partial)=0$) so the cohomology class $[o_f]^*$ is non-zero. Moreover, this class does not depend on the choice of initial map $\widetilde f$, and so we simply call it $\alpha_M^*$. Its image in the absolute cohomology $H^q(M;D)$ is denoted $\alpha_M$. It is the obstruction to extending $\widetilde f$ to $\widetilde M^{(q)}$ after possibly changing it on some (interior or boundary) $(q-1)$-chains.      

\begin{proof}[Proof of Proposition \ref{obstructionproposition}]
Poincare duality and (\ref{vanishequation}) imply that \begin{equation}
H^{q-1}(\partial;D)\cong H_{n-q}(\partial;D)=0.
\end{equation} 
Since $\alpha_M^*\not=0$ the long exact homology sequence
\begin{equation}
\begin{array}{ccccc}
H^{q-1}(\partial;D)&\ra& H^q(M,\partial;D)&\ra& H^q(M;D),\\
||&&&&\\
0&&\alpha_M^*&\mapsto&\alpha_M
\end{array}
\end{equation}
implies $\alpha_M$ is also nonzero. 
\end{proof}
\subsection*{Relation to the fundamental class}
The cap product $\cap[M]$ gives a Poincare duality isomorphism $H^{q}(M,\partial;D)\cong H_{n-q}(M;D)\cong\mathbb Z$. Since $\alpha_M^*$ is a nonzero element on the left hand side, its Poincare dual $\alpha_M^*\cap[M]$ is a non-zero constant multiple of a generator $e$ of $H_{n-q}(M;D)$.

\section{Duality groups}
In more algebraic terms, $(\widetilde M,\widetilde\partial)\sim(*,\vee S^{q-1})$ says $\Gamma$ is an $(n-q)$-dimensional duality group of finite type\footnote{The group $\Gamma$ has {\it finite type} if it has a finite classifying space $B\Gamma$. It is a {\it $d$-dimensional duality group} if $H^*(B\Gamma;\mathbb Z\Gamma)$ is torsionfree and has homology concentrated in a single dimesion $d$.}. Let $d=n-q$. The coefficient module $D$ is identified with the dualizing module
$$
D=\overline H_{q-1}(\widetilde\partial)\cong H_{q}(\widetilde M,\widetilde\partial)\cong H_c^d(\widetilde M)\cong H^d(\widetilde M;\mathbb Z\Gamma)\cong H^d(B\Gamma;\mathbb Z\Gamma). 
$$
We also note that if $\widetilde\partial$ has a small singular model then Poincare duality and (\ref{relativevanish}) imply
\begin{equation}
H^{<q}(M;D)\cong H_{>n-q}(M,\partial;D)=0.
\end{equation} 
If $(M',\partial')\sim(*,\vee S^{q'-1})$ is another pair with $\pi_1M'\cong\Gamma$ then it has an obstruction $\alpha_{M'}\in H^{q'}(M';D).$ If $q=q'$, then we can compare $\alpha_M$ with $\alpha_{M'}$ by identifying both of the cohomology groups with $H^q(B\Gamma;D)$.  So, one wants to understand the structure of $H^*(B\Gamma;D)$.  

\begin{example}
If $\Gamma=F_2^d$ then the dualizing module $D$ is a tensor prodcut $D=D_1\otimes\dots\otimes D_d$, where the $D_i$ are the dualizing modules of the $F_2$-factors in $F_2^d$. From this it is not hard to see that $H^*(BF_2^d;D)$ is concentrated in dimension $d$ and infinitely generated as an abelian group.
\end{example}
This suggests the following
\begin{problem}
Compute the group $H^*(B\Gamma;D)\cong H_{d-*}(B\Gamma;D\otimes D)$ (is it concentrated in a single dimension? infinitely generated?) and determine which elements are obstructions $\alpha_M$. 
\end{problem}

\section{Obstructing compressible fillings via multiplication in cohomology}

In this section we illustrate how multiplication in the cohomology of $\partial$ sometimes obstructs existence of compressible fillings.
Recall that a compressible filling $M$ of $\partial$ gives a splitting \begin{equation}
\label{splitting}
H^*(\partial)\cong H^*(M)\oplus H^{*+1}(M,\partial)\cong H^*(M)\oplus H_{n-*}(M).
\end{equation}

\begin{proposition}
Let $m>1$. If $\partial$ is a closed, orientable $km$-manifold, $H^k(\partial;\mathbb Q)=\left<\omega\right>$ is one dimensional and $\omega^m$ in $H^{mk}(\partial;\mathbb Q)$ is nonzero, then $\partial$ has no compressible fillings. 
\end{proposition}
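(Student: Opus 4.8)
The plan is to derive a contradiction from the ring splitting (\ref{splitting}) by showing that the top power $\omega^m$ cannot survive. Suppose $M$ is a compressible filling of $\partial$, with compression $c:M\to\partial$. First I would use the splitting $H^*(\partial;\mathbb Q)\cong H^*(M;\mathbb Q)\oplus H_{n-*}(M;\mathbb Q)$, where the first summand is the image of the injection $c^*:H^*(\partial;\mathbb Q)\hookrightarrow$ no --- rather, $c^*$ is a split injection of the inclusion $\partial\hookrightarrow M$, so $H^*(M;\mathbb Q)$ is a subring of $H^*(\partial;\mathbb Q)$ (a retract of rings via $c^*\circ \iota^*=\id$, where $\iota:\partial\hookrightarrow M$). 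The complementary summand $H_{n-*}(M;\mathbb Q)\cong H^{*+1}(M,\partial;\mathbb Q)$ is an ideal: it is the kernel of the ring retraction $H^*(\partial;\mathbb Q)\to H^*(M;\mathbb Q)$ induced by $\iota^*$, since $c^*$ realizes $H^*(M)$ as a direct summand and the kernel of a ring homomorphism is an ideal.

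\textbf{Key steps.} The one-dimensional class $\omega$ generating $H^k(\partial;\mathbb Q)$ lies either in the subring $H^k(M;\mathbb Q)$ or has nonzero component in the ideal summand. I would split into these two cases. If $\omega$ lies in the image of $c^*$, say $\omega=c^*\bar\omega$ with $\bar\omega\in H^k(M;\mathbb Q)$, then $\omega^m=c^*(\bar\omega^m)$ lies in the image of $c^*:H^{mk}(M;\mathbb Q)\to H^{mk}(\partial;\mathbb Q)=H^n(\partial;\mathbb Q)\cong\mathbb Q$. But $H^{mk}(M;\mathbb Q)=H^n(M;\mathbb Q)$, and since $M$ is a compact $n$-manifold \emph{with nonempty boundary} $\partial$ (which is nonempty because $\omega\ne 0$ forces $k<n$ so $\partial$ is a genuine boundary), $H^n(M;\mathbb Q)=0$. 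Hence $\omega^m=0$, contradiction. Otherwise, I would project $\omega$ onto the ideal summand: write $\omega=\omega_0+\omega_1$ with $\omega_0\in c^*H^k(M;\mathbb Q)$ and $\omega_1$ in the ideal. Because $H^k(\partial;\mathbb Q)$ is one-dimensional and $\omega_1\ne 0$, in fact $\omega_1=\lambda\omega$ for some $\lambda\ne 0$ after noting both $\omega_0,\omega_1$ are multiples of $\omega$; so I may as well assume $\omega$ itself lies in the ideal. Then $\omega^m$ lies in the $m$-th power of the ideal, which is contained in the image of $H^{mk}(M,\partial;\mathbb Q)$ under the product map $\bigl(H^*(M,\partial)\bigr)^{\otimes m}\to H^*(M,\partial)\to H^*(\partial)$ --- but more simply, $\omega^m=\omega\cdot\omega^{m-1}$ where $\omega\in\operatorname{im}\bigl(H^k(M,\partial;\mathbb Q)\to H^k(\partial;\mathbb Q)\bigr)$, and by the module structure of relative cohomology, the product of a relative class with an absolute class lands in the image of relative cohomology; iterating, $\omega^m$ is the image of a class in $H^{mk}(M,\partial;\mathbb Q)=H^n(M,\partial;\mathbb Q)$. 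By the splitting, $H^n(M,\partial;\mathbb Q)\cong H_0(M;\mathbb Q)\cong\mathbb Q$, so this does \emph{not} immediately vanish; instead I would observe that the restriction map $H^n(M,\partial;\mathbb Q)\to H^n(\partial;\mathbb Q)$ in the long exact sequence factors through $H^n(M;\mathbb Q)=0$, hence is zero, so again $\omega^m=0$, a contradiction.

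\textbf{Main obstacle.} The delicate point is handling the ``mixed'' case cleanly: $\omega$ need not lie wholly in one summand of (\ref{splitting}), so I must argue that it suffices to treat the two pure cases, using that $H^k(\partial;\mathbb Q)$ is one-dimensional so that both the $H^*(M)$-component and the ideal-component of $\omega$ are scalar multiples of $\omega$, and if the ideal-component is nonzero we can rescale to reduce to the pure-ideal case, while if it is zero we are in the pure-image case. I expect the genuinely careful bookkeeping to be confirming that the relative cohomology summand $H^{*+1}(M,\partial;\mathbb Q)$ is an ideal under cup product and that cup products of $m$ elements each lying in the image of $H^*(M,\partial)$ (equivalently, restricting to $\omega$ which is itself such an image) land in the image of $H^{mk}(M,\partial;\mathbb Q)$, together with the degree count $mk=n$ which makes $H^{n}(M;\mathbb Q)=0$ (no closed top cohomology on a manifold with boundary) do the final work. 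Orientability and closedness of $\partial$ enter only to make $H^n(\partial;\mathbb Q)\cong\mathbb Q$ and to identify the splitting via Poincaré--Lefschetz duality on $M$.
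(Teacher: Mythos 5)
Your Case 1 (the case $\omega$ is restricted from $M$, equivalently $c^*\omega\neq 0$) is correct and coincides with the paper's first step. The gap is in Case 2. You claim that if $\omega$ lies in the complementary summand then $\omega\in\operatorname{im}\left(H^k(M,\partial;\mathbb Q)\to H^k(\partial;\mathbb Q)\right)$, and you then place $\omega^m$ in the image of $H^{km}(M,\partial;\mathbb Q)\to H^{km}(\partial;\mathbb Q)$, which factors through $H^{km}(M;\mathbb Q)=0$. But for any pair, the composite $H^*(M,\partial)\to H^*(M)\to H^*(\partial)$ is zero by exactness at $H^*(M)$ (here, since $\iota^*$ is injective, already $H^*(M,\partial)\to H^*(M)$ vanishes). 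So ``$\omega$ lifts to a relative class'' would force $\omega=0$; the premise is vacuous rather than a usable case. The correct identification of the complementary summand is through the connecting map $\delta:H^*(\partial)\to H^{*+1}(M,\partial)$, i.e.\ the summand is $\ker(c^*)$, carried isomorphically by $\delta$ with a degree shift; there is no degree-preserving lift to relative cohomology. Moreover, the weaker true statement --- $\omega\in\ker c^*$, an ideal, hence $\omega^m\in\ker c^*$ --- gives no contradiction by itself: in top degree $c^*$ maps into $H^{km}(M;\mathbb Q)=0$, so $\ker c^*$ is all of $H^{km}(\partial;\mathbb Q)\cong\mathbb Q$, and the fundamental class legitimately lies entirely in the ``ideal'' summand. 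So the ideal-theoretic route cannot close Case 2.

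What does close Case 2 (and is the paper's argument) is surjectivity of $c^*$ onto $H^*(M;\mathbb Q)$, which follows from $c^*\iota^*=\id$: from $c^*\omega=0$ one gets $H^k(M;\mathbb Q)=0$; since $\omega^{m-1}$ spans the one-dimensional group $H^{(m-1)k}(\partial;\mathbb Q)$ (Poincar\'e duality on $\partial$ together with $\omega^m\neq0$), $c^*\omega^{m-1}=(c^*\omega)^{m-1}=0$ gives $H^{(m-1)k}(M;\mathbb Q)=0$ and hence $H_{(m-1)k}(M;\mathbb Q)=0$; then the splitting in degree $k$ reads $H^k(\partial;\mathbb Q)\cong H^k(M;\mathbb Q)\oplus H_{(m-1)k}(M;\mathbb Q)=0$, contradicting one-dimensionality. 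Your dichotomy reduction (using $\dim H^k(\partial;\mathbb Q)=1$ to rule out a genuinely mixed case) is fine, as is the observation $H^{km}(M;\mathbb Q)=0$; also note the notational slip that the ``$c^*$'' in your Key steps is really $\iota^*:H^*(M)\to H^*(\partial)$, which is harmless in Case 1 but worth untangling.
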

\begin{proof}
Suppose $i:\partial\hookrightarrow M$ is a filling with compression $c:M\ra \partial$. If $c^*\omega\not=0$ then $i^*$ and $c^*$ give inverse isomorphisms $H^k(\partial;\mathbb Q)\cong H^k(M;\mathbb Q)$, because $H^k(\partial;\mathbb Q)$ is one-dimensional. So $i^*(c^*\omega^m)=(i^*c^*\omega)^m=\omega^m\not=0$. Consequently $c^*\omega^m$ is a nonzero element in $H^{km}(M)$. But equation (\ref{splitting}) implies $H^{km}(M)=0$, giving a contradiction. Thus we must have $c^*\omega=0$ and $c^*\omega^{m-1}=0$.  But this implies that both $H^k(M;\mathbb Q)$ and $H^{(m-1)k}(M;\mathbb Q)$ vanish. The last one also implies that $H_{(m-1)k}(M;\mathbb Q)=0$. Consequently $H^k(\partial;\mathbb Q)=H^k(M;\mathbb Q)\oplus H_{(m-1)k}(M;\mathbb Q)=0$, again giving a contradiction.
\end{proof}
This applies, for instance, when $\partial$ is a complex projective space $\mathbb CP^n$. More generally, it applies to any closed K\"ahler (or symplectic) manifold $\partial$ with $b_2(\partial)=1$. Smooth complex hypersurfaces in complex projective space $\mathbb CP^n$ for $n\geq 4$ have this property and also generic intersections of such hypersurfaces, as long as the resulting manifold has complex dimension $\geq 3$ (by the Lefschetz hyperplane theorem \cite{bott}). Some of these manifolds are boundaries. For instance, the $\mathbb CP^{2k+1}$ bound: they are total spaces of $S^2$-bundles over quaternionic projective spaces $\mathbb HP^k$, so they bound the corresponding $D^3$-bundles. Thus, the $\mathbb CP^{odd}$ are boundaries with no compressible fillings. (The $\mathbb CP^{2k}$ do not bound. One way to see this is to note that they have nonzero signature.)

Any closed orientable $6$-dimensional manifold is a boundary (see \cite{milnorstasheff}, where one can also find lots of information on when higher dimensional manifolds bound.) On the other hand, it seems hard to find compressible fillings of K\"ahler manifolds even if $b_2>1$, except when the K\"ahler manifold is a product with a surface $\times \Sigma^2$ or, possibly, a surface bundle with section. This suggest the following
\begin{question}
If a closed, simply connected, $6$-dimensional K\"ahler manifold $\partial$ has a compressible filling, does it split as $N^4\times S^2$?
\end{question}
\begin{example}
Suppose $\partial$ satisfies the hypotheses of the question and, in addition $b_2(\partial)=2$. One can show that on the level of rational homotopy, existence of a compressible filling is equivalent to existence of a surjection of graded algebras 
$$
\phi:H^*(\partial;\mathbb Q)\twoheadrightarrow\mathbb Q[\beta]/\beta^3,
$$ 
where $\beta$ is an element of degree $2$. In particular, this only depends on $H^{even}(\partial;\mathbb Q)$ and not on $H^3$.
Let $\omega$ be the K\"ahler form, normalized so that $\int\omega^3=1$. Then, the existence of $\phi$ boils down to two conditions. 
\begin{itemize}
\item
There is an element $\beta\in H^2(\partial;\mathbb Q)$ so that $\beta^2\not=0$ and $\beta^3=0$, and 
\item
if we write $\omega\beta=x\omega^2+y\beta^2$ then\footnote{Multiplying the equation by $\omega$ and by $\beta$ one computes that $x={\int\omega\beta^2\over\int\omega^2\beta}$ and $y={\int\omega^2\beta\over\int\omega\beta^2}-{\int\omega^3\over\int\omega^2\beta}$.} the rational number $s$ given by $\phi(\omega)=s\beta$ must satisfy $s=xs^2+y$, so $(2xs-1)^2=1-4xy$. In particular, $1-4xy$ must be a rational square.   
\end{itemize}
In some cases, (e.g. generic intersections of hypersurfaces in $\mathbb CP^n\times\mathbb CP^m$) one can show that there isn't even a {\it real} surjection $H^*(\partial;\mathbb R)\ra\mathbb R[\beta]/\beta^3$, by showing that $1-4xy$ is negative.
\end{example}

\bibliography{locsym}
\bibliographystyle{plain}
\end{document}